\documentclass{article}

\usepackage{graphicx}
\usepackage{indentfirst}
\usepackage{amsmath,amsfonts,amsthm,amssymb}
\usepackage{mathrsfs}
\usepackage{amscd}
\usepackage{color}
\usepackage{hyperref}

\def\co{\colon\thinspace}
\DeclareMathAlphabet{\mathsfsl}{OT1}{cmss}{m}{sl}

\newcommand{\spin}{\mathrm{Spin}^c}
\newcommand{\bZ}{{\mathbb{Z}}}
\newcommand{\del}{{\partial}}

\newtheorem{thm}{Theorem}[section]
\newtheorem{lem}[thm]{Lemma}
\newtheorem{cor}[thm]{Corollary}
\newtheorem{prop}[thm]{Proposition}

\theoremstyle{definition}
\newtheorem{defn}[thm]{Definition}

\newtheorem{rem}[thm]{Remark}

\newtheorem{example}[thm]{Example}

\begin{document}

\title{Non-simple genus minimizers in lens spaces}

\author{{\Large Joshua Evan Greene}\\{\normalsize Department of Mathematics, Boston College}\\
{\normalsize Carney Hall, Chestnut Hill, MA
02467}\\{\small\it Email\/:\quad\rm joshua.greene@bc.edu}
\vspace{.1in}\\
{\Large Yi Ni}\\{\normalsize Department of Mathematics, Caltech, MC 253-37}\\
{\normalsize 1200 E California Blvd, Pasadena, CA
91125}\\{\small\it Email\/:\quad\rm yini@caltech.edu}}

\date{}
\maketitle

\begin{abstract}
Given a one-dimensional homology class in a lens space, a question related to the Berge conjecture on lens space surgeries is to determine all knots realizing the minimal rational genus of all knots in this homology class. It is known that simple knots are rational genus minimizers. In this paper, we construct many non-simple genus minimizers. This negatively answers a question of Rasmussen.
\end{abstract}

\section{Introduction}

Heegaard Floer homology, introduced by Ozsv\'ath and Szab\'o \cite{OSzAnn1}, has been very successful in the study of low-dimensional topology. One important property of Heegaard Floer homology is that it detects Thurston norm of $3$--manifolds \cite{OSzGenus}. For a closed oriented $3$--manifold $Y$, its Thurston norm is always zero on the torsion subgroup of $H_1(Y;\mathbb Z)$. However, there is another kind of ``norm'' function we can define on the torsion subgroup of $H_1(Y;\mathbb Z)$. To define it, let us first recall the rational genus of a rationally null-homologous knot $K\subset Y$ defined by Calegari and Gordon \cite{CG}.

Suppose that $K$ is a rationally null-homologous oriented knot in $Y$, and $\nu(K)$ is a tubular neighborhood of $K$. A properly embedded, oriented, connected surface $F\subset Y\backslash\overset{\circ}{\nu}(K)$ is called a {\it rational Seifert surface} for $K$ if $\partial F$ consists of coherently oriented parallel curves on $\partial\nu(K)$ and the orientation of $\partial F$ is coherent with the orientation of $K$. The {\it rational genus} of $K$ is defined to be
$$g_r(K)=\min_{F}\frac{\max\{0,-\chi(F)\}}{2|[\mu]\cdot[\partial F]|},$$
where $F$ runs over all the rational Seifert surfaces for $K$ and $\mu\subset\partial\nu(K)$ is the meridian of $K$.

The rational genus is a natural generalization of the genus of null-homologous knots. Moreover, given a torsion class in $H_1(Y)$, we can consider the minimal rational genus over all knots in this torsion class. More precisely,
given $a\in\mathrm{Tors} H_1(Y)$, let 
$$\Theta(a)=\min_{K\subset Y,\:[K]=a}2g_r(K).$$
This function $\Theta$ was introduced by Turaev \cite{TuFunc} in a slightly different form. Turaev regarded $\Theta$ as an analogue of Thurston norm \cite{Th}, in the sense that it measures the minimal normalized Euler characteristic of a ``folded surface'' representing a given class in $H_2(Y;\mathbb Q/\mathbb Z)$.

Recall that a rational homology $3$--sphere $Y$ is an {\it L-space} if $\mathrm{rank}\widehat{HF}(Y)=|H_1(Y;\mathbb Z)|$. A rationally null-homologous knot $K$ in a $3$--manifold $Y$ is {\it Floer simple} if 
$\mathrm{rank}\widehat{HFK}(Y,K)=\mathrm{rank}\widehat{HF}(Y)$. In \cite{NiWu}, it is shown that Floer simple knots in L-spaces have the smallest rational genus among all knots in the same homology class.

An important class of L-spaces is lens spaces.
Hedden \cite{HedBerge} and Rasmussen \cite{RasBerge} observed that for any one-dimensional homology class in a lens space, there exists a knot in this homology class which is Floer simple. Let $U_1\cup U_2$ be a genus one Heegaard splitting of a lens space $L(p,q)$, and let $D_1,D_2$ be meridian disks in $U_1,U_2$ such that $\partial D_1\cap\partial D_2$ consists of exactly $p$ points. A knot in $L(p,q)$ is called {\it simple} if it is either the unknot or the union of two arcs $\gamma_1\subset D_1$ and $\gamma_2\subset D_2$. Up to isotopy there is exactly one simple knot in each homology class, and every simple knot is Floer simple.  Hence simple knots in lens spaces are genus minimizers in their homology classes, which answers a question of Rasmussen \cite{RasBerge} affirmatively.

Rasmussen \cite{RasBerge} also asked whether simple knots are the unique genus minimizers in their homology classes, and if they are not, what the genus minimizers are. Baker \cite{Baker} showed that if the rational genus of a knot in $L(p,q)$ is less than $\frac14$, and the minimal rational Seifert surface has only one boundary component, then the knot is a one-bridge knot. In this paper, we answer Rasmussen's question {\em negatively} in some cases, and in a very strong sense.

\begin{thm}\label{thm:NonUnique}
There exist infinitely many triples $(p,q,a)$ such that the simple knot is not the unique genus minimizer in the homology class $a\in H_1(L(p,q))$.  
Moreover, there exist infinitely many triples $(p,q,a)$ such that there exist infinitely many non-isotopic genus minimizers in the homology class $a \in H_1(L(p,q))$.
\end{thm}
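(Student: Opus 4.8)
The starting point is a reduction that turns the problem into a construction. Write $K_a$ for the simple knot in a class $a\in\mathrm{Tors}\,H_1(L(p,q))$. Since $K_a$ is Floer simple, \cite{NiWu} gives $\Theta(a)=2g_r(K_a)$; equivalently, no knot in the class $a$ has rational genus smaller than $g_r(K_a)$. Hence, to exhibit a rational genus minimizer it suffices to produce \emph{some} knot $K\subset L(p,q)$ with $[K]=a$ together with a single rational Seifert surface $F$ for $K$ satisfying
$$\frac{\max\{0,-\chi(F)\}}{2\,|[\mu]\cdot[\partial F]|}\ \le\ g_r(K_a);$$
such a $K$ is then automatically a minimizer, and it refutes uniqueness the moment $K$ is not isotopic to $K_a$. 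So the plan breaks into three tasks: (i) compute $g_r(K_a)$; (ii) build a concrete non-simple competitor $K$ carrying an efficient surface $F$; (iii) certify $K\not\simeq K_a$ and organize the examples into infinite families.

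For (i) I would use the standard combinatorial model of the minimal rational Seifert surface of a simple knot: with $K_a=\gamma_1\cup\gamma_2$, $\gamma_i\subset D_i$, the ``staircase'' surface assembled from sub-arcs of $D_1$ and $D_2$ has Euler characteristic and boundary slope governed by the continued fraction expansion of $p/q$ together with $a$, which expresses $2g_r(K_a)$ as an explicit arithmetic function of the triple (cf.\ the treatment of simple knots in \cite{CG,NiWu}); that this surface is genus minimizing for $K_a$ is again \cite{NiWu}. For (ii), guided by Baker's theorem \cite{Baker} (which forces a one-bridge knot when the rational genus is small and the minimal rational Seifert surface has one boundary component), I would look among one-bridge knots, keeping the genus one splitting $U_1\cup U_2$ and taking $K$ to be a knot lying in $U_1$ in a braided one-bridge position whose winding number is chosen so that $[K]=a$. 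A rational Seifert surface for $K$ then arises by capping meridian disks of $U_2$ onto a relative Seifert surface for $K$ inside $U_1$ (or, when $K$ is fibered, by taking its fiber); a Seifert-type algorithm applied to the diagram makes $-\chi(F)$ and $[\partial F]$ explicit, and comparing with the formula from (i) isolates exactly those triples $(p,q,a)$ for which this surface is at least as efficient as the staircase surface of $K_a$.

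For (iii) I would distinguish $K$ from $K_a$ by computing $\widehat{HFK}(L(p,q),K)$ from the one-bridge presentation and checking that its rank exceeds $|H_1(L(p,q);\mathbb{Z})|$, so that $K$ is not even Floer simple (hence not simple); exhibiting an essential annulus or torus in $L(p,q)\setminus\overset{\circ}{\nu}(K)$ would serve just as well. The first ``infinitely many triples'' assertion then follows by letting the continued fraction of $p/q$ and the class $a$ range through an infinite family designed so that the genus comparison of (ii) and the non-simplicity certificate of (iii) hold uniformly. For the strong form, fix one such $(p,q,a)$ and twist: choose an auxiliary unknot $L\subset L(p,q)\setminus\overset{\circ}{\nu}(K)$ bounding an embedded disk that meets $K$ in two oppositely oriented points and meets the efficient surface $F$ in a single arc (a band of $F$), and let $K_n$ be the result of $n$ twists along $L$, i.e.\ of $-1/n$ surgery on $L$. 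One checks that this preserves the class $a$ and sends $F$ to a surface with the same $-\chi$ and the same slope, so every $K_n$ is a genus minimizer; arranging that $L(p,q)\setminus\overset{\circ}{\nu}(K\cup L)$ is hyperbolic forces the complements of the $K_n$ to have pairwise distinct volumes, hence the $K_n$ are pairwise non-isotopic, yielding infinitely many non-isotopic minimizers in the single class $a$.

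The main obstacle, I expect, is the genus bookkeeping behind (i) and (ii): extracting a clean closed form for $g_r(K_a)$, building the relative Seifert surface of the competitor with controlled Euler characteristic and boundary slope, and pinpointing the infinite family of triples on which the resulting inequality holds — it is this matching that dictates the precise shape of the examples. Secondary but unavoidable are the verifications that the chosen twist regions are genuinely essential, so that no two $K_n$ are isotopic, and that the non-simplicity certificate (hyperbolicity, or a Floer-homological rank bound) survives along the whole family — both the family of triples in the first part and the family of twists in the second.
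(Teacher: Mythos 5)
Your reduction is the right one and matches the paper's: simple knots are Floer simple, hence genus minimizers by \cite{NiWu}, so it suffices to exhibit a non-simple knot in the class $a$ admitting one rational Seifert surface whose normalized complexity matches $\Theta(a)$, and then to distinguish it from the simple knot. But from that point on your proposal defers exactly the content of the theorem. For the first assertion you never produce a competitor: step (ii) is a search heuristic (``look among one-bridge knots in braided position, apply a Seifert-type algorithm, and see for which triples the inequality holds''), and you yourself flag the matching of Euler characteristics as the main obstacle. That matching \emph{is} the theorem. The paper resolves it with a specific construction: $K_0 = C(1,n)\circ T(1,m)$, the $(1,n)$--cable of the $(1,m)$--torus knot in $L(p,q)$ with $p/q \ge m^2 n$, in the class $mn$. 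Its complement is a graph manifold, and Lemma~\ref{lem:GraphNorm} computes the Thurston norm of the relevant class in each Seifert piece, giving expression \eqref{eq:K0Norm}, which is then checked to equal the norm \eqref{eq:K2Norm} for the simple knot $T(1,mn)$; non-simplicity comes from the essential torus $\partial\nu(T(1,m))$ in the cable's complement versus the Seifert-fibered complement of the simple knot. Note also that your appeal to Baker's theorem points you in the wrong direction: that theorem constrains minimizers of \emph{small} rational genus to be one-bridge, whereas all of the paper's examples have large rational genus ($K_0$ has $g_r = \frac 12$), precisely so as to escape such rigidity.

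For the second assertion your twisting idea is close in spirit to the paper's, but again the existence of a suitable twisting circle is asserted rather than constructed, and the configuration you describe (a disk meeting $F$ in a single arc) is generally impossible when $|[\mu]\cdot[\partial F]|>1$, since the disk then meets $\partial F$ in more than two points. The paper instead works in $L(2k,1)$ with the order-two class $k$: by Theorem~\ref{thm:NOgenus} and Rubinstein's uniqueness theorem, the genus minimizers in this class are exactly the torsion curves on the unique incompressible nonorientable surface $\Pi_k$ (Theorem~\ref{thm:Order2Minimizer}). One then applies Osoinach's annulus twist along an annulus built from a curve $\alpha\subset\Pi_k$ with orientable neighborhood; the twist restricts to a Dehn twist of $\Pi_k$ along $\alpha$, so it carries the torsion curve $\gamma$ to new torsion curves $K(a,b,n)$, which are therefore automatically minimizers with no surface bookkeeping required. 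Distinguishing them by hyperbolic volume via Neumann--Zagier is as you propose, but it still requires exhibiting the explicit surgery presentation and verifying hyperbolicity of the relevant link complement. In short: correct skeleton, but the constructions that constitute the proof are missing.
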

Baker gave an independent proof of Theorem \ref{thm:NonUnique} using the methods of \cite{BGL} after learning about some of our examples \cite{Bakercorr}.

Figure~\ref{fig:FirstExample} displays the ``simplest" non-simple genus minimizer we found. The knot $K_0$ is the $(1,2)$--cable of the $(1,2)$--torus knot in $L(8,1)$. The simple knot in this homology class is the $(1,4)$--torus knot.  These two knots are not isotopic because their complements are not homeomorphic. 
The knot $K_0$ recurs throughout the various classes of examples that we study.

\begin{figure}[ht]
\centering
\includegraphics[height=120pt]{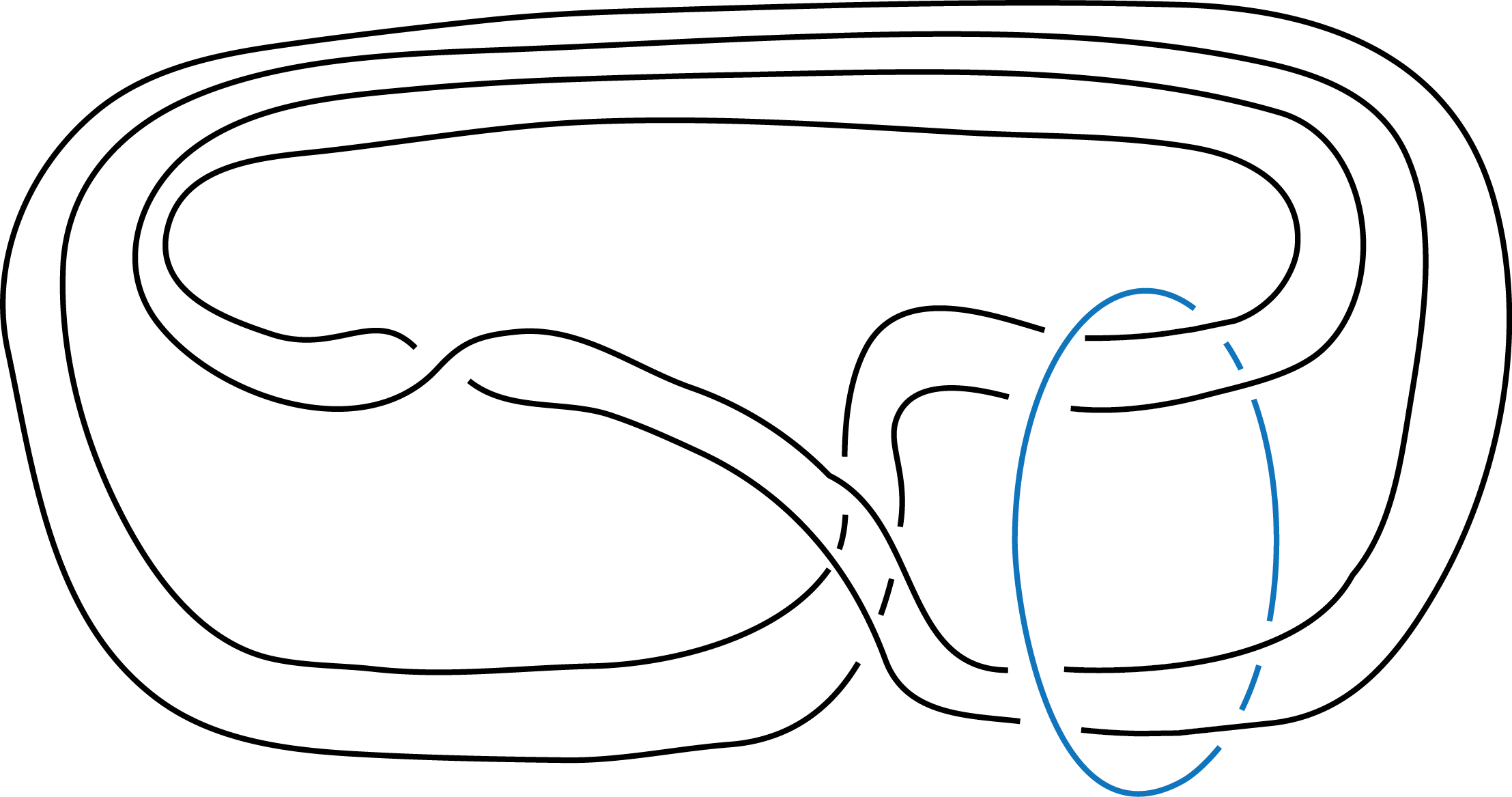}
\put(0,50){$K_0$}
\put(-45,-1){$\textcolor{blue}{8}$}
\caption{\label{fig:FirstExample}The ``simplest" non-simple genus minimizer in $L(8,1)$. The ambient manifold is obtained by $8$--surgery on a component of the link, and the other component is a genus minimizer in $L(8,1)$.}
\end{figure}

The examples we construct all have large rational genus, and they do not appear to be Floer simple knots.  For instance, $K_0$ has rational genus $\frac12$, and we confirmed that it is not Floer simple.  It is possible that simple knots with small rational genus are the unique genus minimizers in their homology classes.  For example, this could be the case for knots with rational genus less than $\frac12$, which include the dual Berge knots \cite{HedBerge,RasBerge}.  It is also possible that simple knots are the only Floer simple knots in lens spaces. 

The lens space $L(2k,q)$ contains a (geometrically) incompressible closed surface $\Pi$, which is necessarily nonorientable.
Rubinstein \cite{RubinOS} proved that $\Pi$ is unique up to isotopy, and its genus $N(2k,q)$ can be computed by \cite{BW} (see also the end of Section \ref{sect:TorsCurve}). Recently, Johnson \cite{Johnson} proved that any nonorientable surface in 
$L(2k,q)$ with a given genus $\ge N(2k,q)$ is unique up to isotopy.  Rubinstein's result characterizes the genus-minimizing knots in the homology class $k\in H_1(L(2k,q))$.

\begin{thm}\label{thm:Order2Minimizer}
In the lens space $L(2k,q)$, a knot $K$ in the homology class $k$ is a genus minimizer if and only if $K$ is isotopic to a curve $C$ on the incompressible surface $\Pi$ in $L(2k,q)$ so that $\Pi-C$ is orientable. In particular, there is a unique genus minimizer in this homology class if $N(2k,q)\le3$, i.e., $\Theta(k) \le \frac14$.
\end{thm}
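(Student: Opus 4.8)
The plan is to match genus minimizers with their minimal rational Seifert surfaces and to move between such surfaces $F$ in a knot complement and closed non-orientable surfaces $\widehat F\subset L(2k,q)$ by capping off a boundary curve that wraps twice with a M\"obius band, then to invoke the uniqueness theorems of Rubinstein and Johnson. Write $N=N(2k,q)$. Two elementary facts set up the correspondence. First, since $[K]$ has order $2$, $2g_r(K)$ is realized by a rational Seifert surface $F$ with connected boundary, so $\partial F$ is a single curve on $\partial\nu(K)$ wrapping twice longitudinally and $|[\mu]\cdot[\partial F]|=2$; that curve bounds a M\"obius band $M\subset\nu(K)$, and $\widehat F:=F\cup_{\partial F}M$ is a closed, connected, \emph{non-orientable} surface in $L(2k,q)$ with $\chi(\widehat F)=\chi(F)$. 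Second, conversely, if $C$ is a curve on a closed non-orientable surface $\Sigma$ with $\Sigma-C$ orientable, a neighborhood of $C$ in $\Sigma$ is a M\"obius band (when $C$ is one-sided, i.e.\ $\chi(\Sigma)$ is odd) or an annulus, so $F:=\Sigma\setminus\overset{\circ}{\nu}(C)$ is an oriented rational Seifert surface for $C$ with $|[\mu]\cdot[\partial F]|=2$ and $\chi(F)=\chi(\Sigma)$, whose cap-off is $\Sigma$.

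For the ``if'' direction: if $C\subset\Pi$ with $\Pi-C$ orientable, the second fact gives a rational Seifert surface showing $2g_r(C)\le\max\{0,-\chi(\Pi)\}/2$. For the matching lower bound, take any knot $K'$ in the class $k$, a minimal rational Seifert surface $F'$, and form $\widehat{F'}$; since $\widehat{F'}$ is a closed non-orientable surface in $L(2k,q)$, the Bredon--Wood identification of $N$ as the minimal non-orientable genus gives $-\chi(F')=-\chi(\widehat{F'})\ge N-2$, hence $2g_r(K')\ge\max\{0,N-2\}/2$. Comparing the upper and lower bounds (with a separate check of the small cases, where $C$ may bound a simpler rational Seifert surface) gives $2g_r(C)=\Theta(k)$, so every such $C$ is a genus minimizer.

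For the ``only if'' direction, let $K$ be a genus minimizer with minimal rational Seifert surface $F$ and cap-off $\widehat F=F\cup M$. By the previous paragraph $\chi(\widehat F)=\chi(\Pi)$, so $\widehat F$ is a closed non-orientable surface of \emph{minimal} genus $N$; hence it is isotopic to $\Pi$ by Johnson's theorem (or Rubinstein's, using that $\widehat F$ may be taken incompressible since it represents the nonzero class in $H_2(L(2k,q);\mathbb{Z}/2)$). Carrying $M\subset\widehat F$ along this isotopy makes $M$ a M\"obius band in $\Pi$; its core $C$ is isotopic in $L(2k,q)$ to $K$ (the core of a M\"obius band in $\nu(K)$ whose boundary wraps twice is isotopic to $K$), and $\Pi-C\cong\widehat F\setminus M=F$ is orientable. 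This is the claimed characterization.

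Finally, the last assertion follows by combining this characterization with the Bredon--Wood computation: $N(2k,q)\le3$ is exactly the range $\Theta(k)\le\frac14$, and there the minimal rational Seifert surface $F$ of a genus minimizer has small complexity, so $\Pi-C$ is planar (a disk or an annulus) or a once-punctured torus; in each case the curves $C$ on $\Pi$ with $\Pi-C$ orientable form a single isotopy class on $\Pi$, so with the uniqueness of $\Pi$ up to isotopy the genus minimizer is unique up to isotopy. The step I expect to cause the most trouble is making the lower bound on $\Theta(k)$ sharp --- i.e.\ ensuring $\widehat F$ has \emph{minimal} rather than merely bounded non-orientable genus, so that Johnson's uniqueness applies --- which rests on the reduction to connected boundary and the Bredon--Wood value, and this is also where the precise equivalence $N(2k,q)\le3\Leftrightarrow\Theta(k)\le\frac14$ gets pinned down.
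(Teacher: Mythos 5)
Your proposal follows essentially the same route as the paper: the capping correspondence you set up is exactly the content of the paper's Theorem~\ref{thm:NOgenus} (the identity $h(Y,A)=2\Theta(\beta(A))+2$ relating minimal non-orientable genus to $\Theta$), the characterization is then derived from Rubinstein's uniqueness of $\Pi$ just as you do, and the uniqueness claim for $N(2k,q)\le3$ is handled by citing Scharlemann's lemma that a closed non-orientable surface of genus at most $3$ carries a unique curve with orientable complement up to isotopy --- the fact you sketch at the end.

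Two small points. First, your opening assertion that a minimal rational Seifert surface $F$ for an order-two class can be taken with connected boundary is unjustified and, as stated, false in general; it is also unnecessary. The condition $|[\mu]\cdot[\partial F]|=2$ only forces $\partial F$ to have one or two components (the boundary curves are parallel and coherently oriented, so their number divides $2$), and the paper simply treats both cases, capping with a M\"obius band or an annulus respectively. Your ``only if'' direction goes through verbatim in the annulus case, since the core of the capping annulus is still isotopic to $K$ and lies on $\widehat F$ with orientable complement. Second, the uniqueness of the curve $C\subset\Pi$ with $\Pi-C$ orientable when $N\le3$ is genuinely the content of Scharlemann's Lemma~2.1 in \cite{Sch}; your ``in each case the curves form a single isotopy class'' needs that reference or an actual argument, and note that the corresponding statement fails for $N\ge4$ --- which is precisely why the paper's non-uniqueness examples exist.
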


\noindent  By contrast, the smallest pair $(2k,q)$ for which $N(2k,q) \ge 4$ is $(8,1)$, in which case we have remarked that the knot $K_0 \subset L(8,1)$ is a non-simple genus minimizer.


The paper is organized as follows. In Section~\ref{sect:Cable}, we construct a family of non-simple rational genus minimizers via the cabling construction. In Section~\ref{sect:TorsCurve}, we describe all the rational genus minimizers in order 2 homology classes.  In Section~\ref{sect:Stab}, we get more examples (most of them hyperbolic) by stabilizing the example in Figure~\ref{fig:FirstExample}. In Section~\ref{sect:InfiniteMany}, we show that some order 2 homology classes contain infinitely many non-isotopic genus minimizers by using the annulus twist construction \cite{Osoinach}.

\vspace{5pt}\noindent{\bf Acknowledgements.}\quad We thank Ian Agol, Ken Baker, Josh Batson, Cameron Gordon, Adam Levine, Danny Ruberman, and Sucharit Sarkar for helpful discussions.  We especially thank Eli Grigsby for providing her computer program to compute knot Floer homology of knots in lens spaces.  We also thank the Simons Center for Geometry and Physics and the organizers of the workshop ``Symplectic and Low Dimensional Topologies in Interaction'' for providing an ideal place for us to collaborate.  The first author was partially supported by NSF grant number DMS-1207812.  The second author was partially supported by NSF grant number DMS-1103976 and an Alfred P. Sloan Research Fellowship. 



\section{Examples of cable knots}\label{sect:Cable}

From now on we assume that $p>q\ge1$ and $\gcd(p,q)=1$. We choose the following presentation for the lens space $L(p,q)$. Take a positive Hopf link $L_1\cup L_2\subset S^3$. $L(p,q)$ is obtained by $\frac pq$--surgery on $L_2$. Let $U_1$ be a tubular neighborhood of $L_1$, $T=\partial U_1$, and $U_2=\overline{L(p,q)-U_1}$.  Thus, $U_1\cup_T U_2$ is the (unique) genus one Heegaard splitting of $L(p,q)$. 

\begin{defn}\label{defn:Simple}
We recall the definition of simple knots from \cite{RasBerge}. Let $(T^2,\alpha,\beta)$ be a genus one Heegaard diagram for $L(p,q)$, where $\alpha$ is the meridian of $U_1$, $\beta$ is the meridian of $U_2$, and $|\alpha \cap \beta| = p$.  Orient $\alpha$ so that its linking number with $L_1$ is positive.  Let $A\subset U_1$ be the meridian disk bounded by $\alpha$ and $B\subset U_2$ be the meridian disk bounded by $\beta$. Pick an intersection point $x_0 \in \alpha \cap \beta$ and label the remaining intersection points by $x_1,\dots,x_{p-1}$ in order of their appearance around $\alpha$, using the orientation on $\alpha$.  For each $a\in\{1,\dots,p-1\}$, the simple knot $K(p,q,a)$ is the oriented knot that is the union of an arc joining $x_0$ to $x_a$ in $A$ with an arc joining $x_a$ to $x_0$ in $B$.  Note that the isotopy type of $K(p,q,a)$ is independent of the choice of $x_0$, due to the symmetry of the Heegaard diagram.
\end{defn}

Let $[L_1]=1\in H_1(L(p,q))\cong\mathbb Z/p\mathbb Z$.
Define $$\varphi_q\co\{0,1,\dots,p-1\}\to \{0,1,\dots,p-1\}$$ by 
requiring  $q\varphi_q(a)\equiv a\pmod p$. 
Then the knot $K(p,q,a)$ represents the homology class $\varphi_q(a)$. In fact,
if we push $K(p,q,a)$ into $U_1$, we get an index--$\varphi_q(a)$ closed braid in $U_1$.

\begin{defn}
For a simple, closed, essential curve on $T$, we define its slope $\frac rs\in\mathbb Q\cup\{\infty\}$ with respect to the canonical longitude of $L_1$.
This curve is then called the {\it $(r,s)$--torus knot} in $L(p,q)$.
\end{defn}

In general, for any knot $K\subset U_1$, there is a canonical longitude of $K$ which is null-homologous 
in $S^3-\nu(K)\supset U_1-\nu(K)$. As usual in Dehn surgery, we parametrize the set of slopes on $K$ by $\mathbb Q\cup \{ \infty \}$ so that the canonical longitude is $0$.

\begin{lem}\label{lem:TorusCondition}
When $k<\frac pq+1$, $K(p,q,qk)$ is the $(1,k)$--torus knot.  Thus, the $(1,k)$--torus knot represents the homology class $k\in H_1(L(p,q))$.
\end{lem}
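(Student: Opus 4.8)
The plan is to identify the $(1,k)$--torus knot explicitly with a simple knot by tracking how the curve of slope $\frac{1}{k}$ on $T = \partial U_1$ sits with respect to the genus one Heegaard diagram $(T^2,\alpha,\beta)$. Recall that $U_1$ is a tubular neighborhood of the Hopf link component $L_1$ in $S^3$, and $L(p,q)$ is obtained by $\frac{p}{q}$--surgery on the other component $L_2$. Since $L_1 \cup L_2$ is a positive Hopf link, the canonical longitude $\lambda$ of $L_1$ (null-homologous in $S^3 - \nu(L_1)$) is isotopic in $S^3 - \nu(L_1)$ to the core of the solid torus $\overline{S^3 - \nu(L_1)}$, hence links $L_2$ once; after the surgery on $L_2$ it becomes a curve on $T$ of a controlled slope relative to the meridian $\alpha$ and longitude of $U_2$. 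First I would write down, in the basis $(\mu_1,\lambda)$ for $H_1(T)$ where $\mu_1$ is the meridian of $U_1$, both the meridian $\beta$ of $U_2$ and the curve $c_k$ of slope $\frac{1}{k}$, i.e. $c_k = \mu_1 + k\lambda$ (the $(1,k)$--torus knot by definition).

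The key computation is to count $|c_k \cap \beta|$ on $T$ and to check that $c_k$ meets $\alpha = \mu_1$ in exactly one point (which is immediate: $c_k \cdot \alpha = \pm 1$ in $H_1(T)$ since $c_k = \mu_1 + k\lambda$ and $\mu_1 \cdot \lambda = \pm 1$, so $c_k$ is isotopic on $T$ to a curve meeting $\alpha$ once). Meeting $\alpha$ in a single point means $c_k$ can be isotoped on $T$ to run across the meridian disk $A \subset U_1$ exactly once and otherwise lie in $T$; pushing the portion on $T$ slightly into $U_2$ it becomes an arc in $A$ from $x_0$ to some $x_a$ union an arc in $B$ from $x_a$ back to $x_0$ — that is, precisely the simple knot $K(p,q,a)$ for the appropriate label $a$. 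To pin down $a$: the condition $k < \frac{p}{q}+1$ ensures that when one isotopes $c_k$ so that its single intersection with $\alpha$ occurs and one follows it around, it passes through the $\beta$--strands without ``wrapping around'' more than allowed, so that the arc in $B$ connects $x_0$ to exactly the $(qk)$--th intersection point in the cyclic order determined by the surgery slope $\frac{p}{q}$; this is where the hypothesis $k < \frac{p}{q}+1$ does its work, keeping $qk$ in the range $\{1,\dots,p-1\}$ and ensuring the arc doesn't run off the end of the diagram. Thus $c_k = K(p,q,qk)$.

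The homology statement then follows immediately from the remark preceding the lemma: $K(p,q,a)$ represents the class $\varphi_q(a) \in \mathbb{Z}/p\mathbb{Z}$, where $q\varphi_q(a) \equiv a \pmod p$; taking $a = qk$ gives $\varphi_q(qk) = k$, so $K(p,q,qk)$ represents $k \in H_1(L(p,q))$. Alternatively, one sees directly that $c_k = \mu_1 + k\lambda$ and that $\lambda$ (the canonical longitude of $L_1$, hence a once-round curve in $U_1$) represents $1 \in H_1(L(p,q)) \cong \mathbb{Z}/p\mathbb{Z}$ while $\mu_1$ bounds $A$ and is trivial, so $[c_k] = k$.

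The main obstacle I expect is the bookkeeping in the middle step: correctly orienting $\alpha$ (so its linking with $L_1$ is positive, as in Definition~\ref{defn:Simple}) and verifying that the cyclic labeling $x_0,\dots,x_{p-1}$ along $\alpha$ matches up with the surgery description so that the relevant arc lands on $x_{qk}$ rather than, say, $x_{q'k}$ for some other inverse, or off by a sign. Concretely, one must track the $\frac{p}{q}$--surgery on $L_2$ carefully enough to see that the $p$ points of $\alpha \cap \beta$, read in order along $\alpha$, correspond to jumping by $q$ each time along $\beta$ (or its inverse), and then confirm that the curve of slope $\frac 1k$ realizes the jump by $qk$. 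The inequality $k < \frac pq + 1$ is exactly the condition under which this curve is embedded and gives a genuine simple knot without self-overlap, so the proof must invoke it precisely at that point.
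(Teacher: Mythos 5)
There is a genuine error at the heart of your argument. You set $c_k = \mu_1 + k\lambda$ (which is the correct reading of ``slope $\tfrac1k$'') and then claim $c_k \cdot \alpha = \pm 1$ because $\mu_1\cdot\lambda=\pm1$. But $\alpha = \mu_1$, so $c_k\cdot\alpha = (\mu_1+k\lambda)\cdot\mu_1 = k(\lambda\cdot\mu_1) = \mp k$; the class that $c_k$ meets once is $\lambda$, not $\alpha$. This is not a repairable sign slip: the $(1,k)$--torus knot has winding number $k$ in $U_1$ (that is how it represents the homology class $k$, and why $L(p,q)-\nu(c_k)$ has an exceptional fiber of order $k$ in Section~\ref{sect:Cable}), so for $k\ge2$ it cannot be isotoped to cross the meridian disk $A$ exactly once. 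Consequently the entire mechanism of your proof --- ``$c_k$ runs across $A$ once, hence is an arc in $A$ from $x_0$ to $x_a$ union an arc in $B$'' --- does not apply; the step where you would identify $c_k$ with a simple knot never gets off the ground. (A secondary symptom: your stated role for the hypothesis, ``keeping $qk$ in the range $\{1,\dots,p-1\}$,'' is also not what $k<\tfrac pq+1$ does, since that inequality only gives $qk<p+q$.)

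The paper runs the identification in the opposite direction, and the geometry is genuinely different from what you describe: one starts with the simple knot $K(p,q,qk)$, pushes its arc in $A$ onto a subarc of $\alpha$ and its arc in $B$ onto a subarc of $\beta$, and uses $qk<p+q$ to arrange that these two subarcs have disjoint interiors, so their union is an embedded curve on $T$; one then reads off that this curve meets $\alpha$ in $k$ points and a longitude once, hence has slope $\tfrac1k$. If you want to argue in your direction (torus knot $\Rightarrow$ simple knot), you would instead have to isotope $c_k$ within $T$ so that it decomposes as one subarc of $\alpha$ and one subarc of $\beta$ --- not so that it meets $A$ once --- and that is exactly where the hypothesis enters. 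Your concluding homology computation ($\varphi_q(qk)=k$, or $[\lambda]=1$ so $[c_k]=k$) is fine, but it rests on the unproven identification.
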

\begin{proof}
As in Definition~\ref{defn:Simple}, $K(p,q,qk)$ is the union of two arcs. We can push these two arcs into the Heegaard torus $T^2$ such that one arc is contained in $\alpha$ and the other arc is contained in $\beta$. When $k< p/q+1$, the two arcs can be chosen so that their interiors are disjoint on $T^2$. So $K(p,q,qk)$ is a torus knot, and it is straightforward to see that it is the $(1,k)$--torus knot.
\end{proof}

The following lemma is elementary.

\begin{lem}\label{lem:BoundarySlope}
Suppose $K\subset U_1$ is a knot with winding number $w$. Let $\mu\subset\partial\nu(K)$ be the meridian of $K$, $\lambda\subset \partial\nu(K)$ the canonical longitude of $K$, and
$$
\iota_*\co H_1(\partial\nu(K))\to H_1(L(p,q)-\nu(K))
$$
the map induced by the inclusion.  Then $\ker\iota_*$ is generated by  $(w^2q/d)[\mu]+(p/d)[\lambda]$,  where $d = \gcd(w,p)$.
\end{lem}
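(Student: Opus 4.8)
The plan is to write $X := L(p,q)\setminus\nu(K)$ as $(U_1\setminus\nu(K))\cup_T U_2$ and run a Mayer--Vietoris computation after identifying the relevant homology classes on the three pieces. On $T=\partial U_1$, take the basis $\{\lambda_{L_1},\mu_{L_1}\}$ of $H_1(T)$ given by the canonical ($0$--framed) longitude and the meridian of $L_1$. Since $L_1\cup L_2$ is a Hopf link, $V:=\overline{S^3\setminus U_1}$ is a solid torus whose core is isotopic to $L_2$, whose meridian on $T$ is $\lambda_{L_1}$ and whose $0$--framed longitude is $\mu_{L_1}$; equivalently, as curves on $T$, $\mu_{L_2}=\lambda_{L_1}$ and $\lambda_{L_2}=\mu_{L_1}$ for suitable orientations. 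Because $U_2$ is obtained from $V$ by $\tfrac pq$--surgery on its core, $H_1(U_2)=H_1(T)/\langle p\lambda_{L_1}+q\mu_{L_1}\rangle$, which is $\mathbb{Z}$ as $\gcd(p,q)=1$; under the inclusion $T\hookrightarrow U_2$ a class $x\lambda_{L_1}+y\mu_{L_1}$ maps to $qx-py$ in a suitable identification $H_1(U_2)\cong\mathbb{Z}$.

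For the other piece, the long exact sequence of the pair $(U_1,U_1\setminus\nu(K))$ together with excision gives a split extension $0\to\mathbb{Z}\langle[\mu]\rangle\to H_1(U_1\setminus\nu(K))\to H_1(U_1)\to0$, the quotient recording the winding number in $U_1$; hence $\{[\mu],[\lambda_{L_1}]\}$ is a basis. In it, $[\mu_{L_1}]=w[\mu]$, since the meridian disk of $U_1$ meets $K$ algebraically $w$ times. The key point is that $[\lambda]=w[\lambda_{L_1}]$, where $\lambda$ is the canonical longitude of $K$ in $S^3$: its $[\lambda_{L_1}]$--coefficient equals the winding number $w$ (the longitude is homologous to $K$ inside $\nu(K)\subset U_1$), and its $[\mu]$--coefficient vanishes because $\lambda$ is null--homologous in $S^3\setminus\nu(K)\supset U_1\setminus\nu(K)$ while $\lambda_{L_1}$ also dies in $H_1(S^3\setminus\nu(K))$ --- indeed $\lambda_{L_1}=\mu_{L_2}$ bounds a meridian disk of $V\cong\nu(L_2)$, which is disjoint from $U_1\supset K$, so $\mathrm{lk}(\lambda_{L_1},K)=0$.

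Now $\iota_*$ factors through $H_1(U_1\setminus\nu(K))\to H_1(X)$ with $[\mu]\mapsto[\mu]$ and $[\lambda]\mapsto w[\lambda_{L_1}]$, and Mayer--Vietoris identifies $H_1(X)$ with the cokernel of $H_1(T)\to H_1(U_1\setminus\nu(K))\oplus H_1(U_2)$. Thus $m[\mu]+n[\lambda]\in\ker\iota_*$ precisely when some $t=x\lambda_{L_1}+y\mu_{L_1}\in H_1(T)$ maps to $m[\mu]+nw[\lambda_{L_1}]$ in $H_1(U_1\setminus\nu(K))$ and to $0$ in $H_1(U_2)$; this reads $x=nw$, $yw=m$, and $qx-py=0$, forcing $p\mid qnw$ and $m=qnw^2/p$. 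With $d=\gcd(w,p)$ and $\gcd(p,q)=1$, solvability is equivalent to $(p/d)\mid n$, and then $(m,n)$ is the corresponding multiple of $\bigl((w^2q/d),\,(p/d)\bigr)$. This yields $\ker\iota_*=\mathbb{Z}\bigl((w^2q/d)[\mu]+(p/d)[\lambda]\bigr)$, as claimed. (One checks the degenerate case $w=0$, where $d=p$, separately: then $[\lambda]=0$ already in $H_1(U_1\setminus\nu(K))$ and is a primitive generator of $\ker\iota_*$, consistent with the formula.)

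The one genuinely substantive step is the identity $[\lambda]=w[\lambda_{L_1}]$ in $H_1(U_1\setminus\nu(K))$; in particular, the vanishing of the meridional component rests on the Hopf--link geometry, namely $\mathrm{lk}(\lambda_{L_1},K)=0$. Everything else is linear algebra, together with careful, consistent orientation choices on $T$, $U_1\setminus\nu(K)$, and $U_2$; these affect only the signs of the displayed generator and not the subgroup it generates.
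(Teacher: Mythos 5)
Your proof is correct. It reaches the same linear--algebraic core as the paper's argument but packages it differently: the paper presents $H_1(S^3-\nu(K\cup L_2))$ on the four generators $[\mu],[\lambda],[\mathcal M],[\mathcal L]$ with the relations $w[\mu]=[\mathcal L]$ and $[\lambda]=w[\mathcal M]$, appends the surgery relation $p[\mathcal M]+q[\mathcal L]=0$, and extracts the kernel by an explicit $\mathrm{SL}(3,\mathbb Z)$ row reduction of the resulting presentation matrix, whereas you decompose $L(p,q)-\nu(K)$ along the Heegaard torus $T$ and run Mayer--Vietoris. Under the dictionary $\lambda_{L_1}\leftrightarrow\mathcal M$, $\mu_{L_1}\leftrightarrow\mathcal L$ (the Hopf-link identification you invoke), your identities $[\mu_{L_1}]=w[\mu]$ and $[\lambda]=w[\lambda_{L_1}]$ are exactly the paper's two relations, and your condition $qx-py=0$ in $H_1(U_2)$ is the surgery relation; the two computations are therefore isomorphic, and you simply solve the small linear system directly instead of row-reducing a matrix. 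Your route has the minor advantages of isolating the one genuinely geometric input (the vanishing $\mathrm{lk}(\lambda_{L_1},K)=0$, which forces $[\lambda]=w[\lambda_{L_1}]$) and of checking the degenerate case $w=0$ explicitly; the paper's presentation-matrix manipulation is more compact but conceals where that geometry enters. Either way the generator of $\ker\iota_*$ comes out as $(w^2q/d)[\mu]+(p/d)[\lambda]$.
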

\begin{proof}
Let $\mathcal M\subset \partial\nu(L_2)$ be the meridian of $L_2$ and $\mathcal L\subset \partial\nu(L_2)$ the canonical longitude of  $L_2$. The homology group $H_1(S^3-\nu(K\cup L_2))$ has a presentation
$$\langle[\mu],[\lambda],[\mathcal M],[\mathcal L]\:|\:w[\mu]=[\mathcal L], [\lambda]=w[\mathcal M]\rangle.$$
After $\frac pq$--surgery on $L_2$, we obtain a new relation $p[\mathcal M]+q[\mathcal L]=0$. So we get the following presentation matrix for $H_1(L(p,q)-\nu(K))$:
$$\begin{pmatrix}
w &0 &0 &-1\\
0 &1 &-w &0\\
0 &0 &p &q
\end{pmatrix}.
$$
Since $\gcd(w,p)=d$, there exists a pair of integers $r,s$ such that $rw+sp=d$. We multiply the above matrix on the left by
$$\begin{pmatrix}
1 &0 &0\\
-sq &r &-s\\
wq/d &p/d &w/d
\end{pmatrix}
\in \mathrm{SL}(3,\mathbb Z)$$
to get a new presentation matrix
$$
\begin{pmatrix}
w &0 &0 &-1\\
-sqw &r &-d &0\\
w^2q/d &p/d &0 &0
\end{pmatrix}.
$$
The conclusion follows from this presentation matrix.
\end{proof}

\begin{defn}
Given a compact surface $F$, define its {\it norm} $$\chi_-(F)=\sum_i\max\{-\chi(F_i),0\},$$ where the sum is taken over all components $F_i$ of $F$. Given $A\in H_2(M,\partial M)$ for a $3$--manifold $M$,
the {\it Thurston norm} of $A$ is defined to be
$$\chi_-(A)=\min\big\{\chi_-(F)\big|(F,\partial F)\subset(M,\partial M),\quad[F,\partial F]=A\big\}.$$
\end{defn}

The Thurston norm of a graph manifold is easily determined by the following lemma.

\begin{lem}\label{lem:GraphNorm}
Suppose that $M$ is a graph manifold with JSJ pieces $M_1,\dots,M_n$. Suppose that each $M_i$ has an oriented Seifert fibration over an oriented orbifold $\mathcal B_i$ of orbifold Euler characteristic $\chi_{\mathrm{orb}}(\mathcal B_i)<0$, and $C_i\subset M_i$ is a regular fiber.  Given a homology class $A\in H_2(M)$, we have 
$$
\chi_-(A)=\sum_i|(A\cdot[C_i]) \, \chi_{\mathrm{orb}} (\mathcal B_i)|.
$$
Moreover, if $A\cdot[C_i]\ne0$ for each $i$, then $A$ is represented by the fiber of a fibration of $M$ over $S^1$.
\end{lem}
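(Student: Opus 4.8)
The plan is to reduce the computation of the Thurston norm on the graph manifold $M$ to the known structure of the Thurston norm on each Seifert-fibered JSJ piece. First I would recall that for a Seifert-fibered piece $M_i$ over an oriented hyperbolic orbifold $\mathcal B_i$, the Thurston norm of a class $B_i \in H_2(M_i, \partial M_i)$ is given by $\chi_-(B_i) = |(B_i \cdot [C_i])\,\chi_{\mathrm{orb}}(\mathcal B_i)|$; a norm-minimizing representative is obtained either as a horizontal surface (a branched cover of $\mathcal B_i$, with multiplicity $|B_i \cdot [C_i]|$) when the intersection with the fiber is nonzero, or as a union of vertical tori and annuli when it is zero. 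The second ingredient is that any norm-minimizing surface $F$ representing $A \in H_2(M)$ can be isotoped so that $F \cap \partial M_i$ is a collection of essential curves on each JSJ torus, hence $F_i := F \cap M_i$ represents a well-defined class in $H_2(M_i, \partial M_i)$, and $\chi_-(F) = \sum_i \chi_-(F_i) \ge \sum_i \chi_-([F_i])$. Since $[F_i] \cdot [C_i] = A \cdot [C_i]$ (the fiber class $[C_i]$ is carried in the interior of $M_i$ and intersection numbers are computed inside $M_i$), this gives the lower bound $\chi_-(A) \ge \sum_i |(A\cdot[C_i])\,\chi_{\mathrm{orb}}(\mathcal B_i)|$.

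For the matching upper bound I would build a surface from the pieces: choose in each $M_i$ a norm-minimizing (horizontal or vertical) surface representing the restriction of $A$, and arrange the boundary slopes on the JSJ tori to agree so that the pieces glue to a closed surface $F \subset M$ with $[F] = A$ and $\chi_-(F) = \sum_i |(A\cdot[C_i])\,\chi_{\mathrm{orb}}(\mathcal B_i)|$. The compatibility of the gluing is where one has to be slightly careful: on each JSJ torus the horizontal surfaces coming from the two adjacent pieces meet the fibers of the two sides in prescribed numbers of points, and one must check that the number of boundary components and their slopes can be matched after passing to parallel copies — this is a standard fact for graph manifolds and follows because the relevant surfaces can be taken horizontal and one is free to take disjoint parallel copies of the fiber-transverse surfaces. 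When $A \cdot [C_i] = 0$ for some $i$, that piece contributes only vertical tori and annuli (norm zero) and the gluing is immediate.

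For the final assertion, suppose $A \cdot [C_i] \neq 0$ for every $i$. Then the norm-minimizing surface constructed above is horizontal in every JSJ piece, i.e.\ transverse to all the Seifert fibers; a surface that is everywhere transverse to a foliation (here the Seifert fiberings assembled along the JSJ tori, where the fibers on the two sides are non-parallel) is automatically a fiber of a fibration of $M$ over $S^1$ by the standard argument — the fibers foliate $M$ and $F$ meets each fiber transversely, so $M$ fibers over $S^1$ with fiber $F$ (this is the graph-manifold analogue of Stallings' fibration criterion; see Eisenbud--Hirsch--Neumann or the treatment of horizontal surfaces in Seifert manifolds). The main obstacle I expect is the bookkeeping in the gluing step: making sure the boundary curves of the piecewise horizontal surfaces can genuinely be matched up slope-by-slope and component-by-component on each JSJ torus so as to produce a \emph{connected} closed surface realizing the norm, rather than a collection whose Euler characteristics merely add correctly. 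Everything else is an assembly of known facts about Seifert-fibered spaces and the additivity of the Thurston norm under gluing along incompressible tori.
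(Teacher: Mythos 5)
Your proposal is correct and rests on the same two ingredients as the paper's proof (decomposition of a norm-minimizing surface along the JSJ tori, plus the classification of incompressible surfaces in Seifert fibered spaces as vertical or horizontal), but the logical structure is different in a way worth noting: you prove the formula as two separate inequalities, whereas the paper proves it as a single direct computation, and in doing so sidesteps exactly the step you flag as your ``main obstacle.'' The paper takes a norm-minimizing surface $F$ representing $A$, isotopes it so that each $F_i = F \cap M_i$ is incompressible, and then observes that the classification \emph{forces} $\chi_-(F_i)$ to equal $|(A\cdot[C_i])\,\chi_{\mathrm{orb}}(\mathcal B_i)|$ in every piece --- zero for vertical annuli and tori, and $|A\cdot[C_i]|\,|\chi_{\mathrm{orb}}(\mathcal B_i)|$ for a horizontal surface, since the fibration induces a degree-$|A\cdot[C_i]|$ covering $F_i \to \mathcal B_i$. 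Summing gives $\chi_-(A)=\chi_-(F)=\sum_i\chi_-(F_i)$ with no inequality anywhere, so no separate upper-bound construction is required: the representative whose existence you were trying to engineer by matching boundary slopes is simply the minimizer you already started with. (Your gluing step could be made to work --- the class $A$ determines the boundary class of $F_i$ on each JSJ torus from both sides, and parallel essential curves in a fixed homology class on a torus are unique up to isotopy --- but it is superfluous.) For the final assertion both arguments agree in substance: each $F_i$ is a fiber of a fibration of $M_i$ over $S^1$, and these fibrations glue along the JSJ tori because they restrict to isotopic fibrations of each torus by the parallel boundary curves; phrasing this as transversality to a global foliation is slightly off, since the Seifert fibrations of adjacent pieces do not assemble into a foliation of $M$, but the conclusion is the same.
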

\begin{proof}
Let $F$ be a (possibly disconnected) Thurston norm-minimizing surface that represents $A$. We may isotope $F$ such that each $F_i=F\cap M_i$ is an incompressible surface. It follows from the classification of incompressible surfaces in Seifert fibered spaces that $F_i$ consists of vertical annuli and tori if $[F_i]\cdot[C_i]=0$, and $F_i$ is the fiber of a fibration of $M_i$ over $S^1$ if $[F_i]\cdot[C_i]\ne0$ \cite[Theorem~6.34]{Jaco}.

If $F_i$ consists of vertical annuli and tori, then $F_i$ has no contribution to the norm of $F$. If $F_i$ is the fiber of a fibration, then $F_i$ is isotopic to a surface transverse to all Seifert fibers of $M_i$.  Then the Seifert fibration $M_i\to\mathcal B_i$ induces a covering map $F_i\to\mathcal B_i$ whose degree is $|A\cdot[C_i]|$, so $\chi(F_i)=|A\cdot[C_i]| \,\chi_{\mathrm{orb}}(\mathcal B_i)$.  So we have
$$
\chi_-(A)=\chi_-(F)=\sum_i\chi_-(F_i)=\sum_i|(A\cdot[C_i]) \, \chi_{\mathrm{orb}}(\mathcal B_i)|.
$$
If $A\cdot[C_i]\ne0$ for each $i$, then each $M_i$ has a fibration over $S^1$. Gluing these fibrations for all $i$ together, we get a fibration of $M$ over $S^1$ whose fiber is $F$.
 \end{proof}

Suppose that $K\subset Y$ is a knot with a frame $\lambda$ and meridian $\mu$. Let $C\subset \partial\nu(K)$ be a simple closed curve which is homologous to $p[\mu]+q[\lambda]$. Then we say that $C$ is the {\it$(p,q)$--cable} of $K$ and denote it $C=C(p,q)\circ K$.

\begin{thm}
Suppose that $m,n\ge2$. If $\frac pq\ge m^2n$, then the $(1,n)$--cable of the $(1,m)$--torus knot is a genus-minimizing knot in the homology class $mn$. 
Moreover, if $q\ne m$, then this cable knot is not a simple knot.
\end{thm}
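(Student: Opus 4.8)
The plan is to reduce the theorem to the equality of two rational genera together with a comparison of two knot exteriors, and to compute the rational genera by the method used to prove Lemma~\ref{lem:GraphNorm}. Write $K=C(1,n)\circ T_m$. Since $T_m$ lies on the Heegaard torus with winding number $m$ in $U_1$, it represents $m\in H_1(L(p,q))$ (cf.\ Lemma~\ref{lem:TorusCondition}), and since the meridian of $T_m$ is null-homologous in $L(p,q)$ we get $[K]=n[T_m]=mn$. The hypothesis $\frac pq\ge m^2n$ gives $mn<\frac pq+1$, so Lemma~\ref{lem:TorusCondition} identifies the simple knot in the class $mn$ with the torus knot $T_{mn}$; as simple knots are genus minimizers, $\Theta(mn)=2g_r(T_{mn})$. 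It therefore suffices to show that $2g_r(K)=2g_r(T_{mn})$, and, when $q\ne m$, that $K$ is not isotopic to $T_{mn}$.

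\emph{The exteriors.} The curve $T_m$ has winding numbers $m$ in $U_1$ and $p-mq$ in $U_2$, so cutting $L(p,q)$ along the Heegaard torus exhibits $E(T_m)=L(p,q)-\nu(T_m)$ as two fibered solid tori glued along an annulus whose core is parallel to $T_m$; hence $E(T_m)$ is Seifert fibered over the disk with cone points of orders $m$ and $p-mq$, and likewise $E(T_{mn})$ is Seifert fibered over the disk with cone points of orders $mn$ and $p-mnq$. The exterior of the cable splits as $E(K)=E(T_m)\cup_{\partial\nu(T_m)}\mathcal C$, where $\mathcal C=\nu(T_m)-\nu(K)$ is the cable space of the $(1,n)$--cable, Seifert fibered over the annulus with one cone point of order $n$. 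Under $m,n\ge2$ and $\frac pq\ge m^2n$ every cone-point order exceeds $1$ and every orbifold Euler characteristic is negative.

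\emph{Computing the rational genera.} By Lemma~\ref{lem:BoundarySlope} applied with winding number $mn$, every rational Seifert surface for $K$ has boundary slope $\frac{m^2n^2q}{p}$, and a minimal one $F$ satisfies $|[\mu]\cdot[\partial F]|=p/\gcd(mn,p)$. I would locate $F$ as in the proof of Lemma~\ref{lem:GraphNorm}: isotope $F$ so that $F\cap\mathcal C$ and $F\cap E(T_m)$ are incompressible, whence each is either vertical (contributing nothing to $\chi_-$) or horizontal, i.e.\ a branched cover of the corresponding base orbifold; the horizontal pieces must match along $\partial\nu(T_m)$, and this constraint, together with the prescribed slope on $\partial\nu(K)$, pins down the covering degrees. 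Summing $|(\mathrm{degree})\cdot\chi_{\mathrm{orb}}|$ over the pieces yields $\chi_-(F)$, hence $2g_r(K)$, as an explicit function of $(p,q,m,n)$; running the analogous one-piece computation for $E(T_{mn})$ gives $2g_r(T_{mn})$, and the two are seen to coincide. I expect this verification to be the main obstacle: it requires tracking the fiber slope of $E(T_m)$ on $\partial\nu(T_m)$, the cabling framing of $T_m$, and the Seifert data of the pieces, and it is here that the inequality $\frac pq\ge m^2n$ enters — it is the condition under which the cable knot's minimal rational Seifert surface becomes as simple as the torus knot's.

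\emph{Non-simplicity.} When $q\ne m$, the torus $\partial\nu(T_m)$ is incompressible in both $\mathcal C$ and $E(T_m)$ and is not boundary-parallel (neither adjacent piece is $T^2\times I$), so it is an essential torus in $E(K)$; but $E(T_{mn})$, being Seifert fibered over a disk with only two cone points, contains no essential torus. Hence $E(K)\not\cong E(T_{mn})$, and $K$ is not isotopic to the simple knot in its homology class; in particular $K$ is not a simple knot.
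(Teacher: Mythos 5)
Your proposal follows essentially the same route as the paper's proof: split $E(K)$ into the cable space and $E(T_m)$, use Lemma~\ref{lem:BoundarySlope} to pin down the boundary class and Lemma~\ref{lem:GraphNorm} to evaluate the Thurston norms of the two exteriors piece by piece, and distinguish $K$ from $T_{mn}$ via the essential torus $\partial\nu(T_m)$ versus the atoroidal Seifert fibered exterior of the simple knot. The single computation you defer does check out: when $\frac pq\ge m^2n$ the term $pn-q(mn)^2$ is nonnegative, and both norms evaluate to $(mn-1)(p-qmn)-mn$, so the hypothesis enters exactly where you predicted.
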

\begin{proof}
Let $K_1\subset U_1$ be the $(1,m)$--torus knot, $K_2\subset U_1$ the $(1,mn)$--torus knot, and $K_0=C(1,n)\circ K_1$.
By Lemma~\ref{lem:TorusCondition}, $K_1$ and $K_2$ are simple knots. 
Let $\mu_i,\lambda_i$ be the meridian and longitude on $\partial\nu(K_i)$, $i=0,1,2$.

First, we consider the rational genus of $K_2$. The manifold $M_2=L(p,q)-\nu(K_2)$ is Seifert fibered over a disk with two singular points of order $mn$ and $p-qmn$. The homology class of the Seifert fiber on $\partial \nu(K_2)$ is $mn[\mu_2]+[\lambda_2]$.
By Lemma~\ref{lem:BoundarySlope}, we know that  the homology class $(mn)^2q[\mu_2]+p[\lambda_2]$ is null-homologous in $M_2$. By Lemma~\ref{lem:GraphNorm}, a minimal genus rational Seifert surface bounding this homology class is the fiber of a fibration of $M_2$, and its norm is given by
\begin{equation}\label{eq:K2Norm}
\Big|pmn-q(mn)^2\Big|\cdot\Big(1-\frac1{mn}-\frac1{p-qmn}\Big).
\end{equation}

Next, we consider the rational genus of $K_0$. The manifold $M=L(p,q)-\nu(K_0)$ is the union of two Seifert fibered spaces $M_0=\nu(K_1)-\nu(K_0)$ and $M_1=L(p,q)-\nu(K_1)$. The base orbifold of the cable space $M_0$ is an annulus with a singular point of order $n$. The homology class of its  Seifert fiber on $\partial \nu(K_0)$ is $n[\mu_0]+[\lambda_0]$. The base orbifold of  $M_1$ is 
a disk with two singular points of order $m$ and $p-qm$. The homology class of its Seifert fiber on $\partial \nu(K_1)$ is $m[\mu_1]+[\lambda_1]$.  Consider a minimal genus rational Seifert surface $F$ bounding the homology class $(mn)^2q[\mu_0]+p[\lambda_0]$.  
The homology class of the intersection of $F$ with $\partial \nu(K_1)$ is $m^2nq[\mu_1]+pn[\lambda_1]$. By Lemma~\ref{lem:GraphNorm}, the norm of $F$ is
\begin{equation}\label{eq:K0Norm}
\Big|pn-q(mn)^2\Big|\cdot\Big(1-\frac1{n}\Big)+\Big|pmn-qm^2n\Big|\cdot\Big(1-\frac1{m}-\frac1{p-qm}\Big).
\end{equation}
Assuming that $\dfrac{p}{q} \ge m^2n$, \eqref{eq:K2Norm} equals \eqref{eq:K0Norm}. 
Since $K_2$ is a genus minimizer, it follows that $K_0$ is, as well.

If $q\ne m$, then  the complement of $K_1$ is not a solid torus, so the complement of $K_0$ contains an essential torus which is not parallel to the boundary. On the other hand, the complement of $K_2$ is a Seifert fibered space over $D^2$ with two singular points.  Therefore, $K_0$ is not isotopic to $K_2$, the simple knot in its homology class.
\end{proof}

\begin{cor}
Suppose that  $m_1,m_2,\dots,m_k\ge2$. Let $T(1,m_1)$ be the $(1,m_1)$--torus knot. If $\frac pq\ge m_1^2\cdots m^2_{k-1} m_k$, then the iterated torus knot $$C(1,m_k)\circ C(1,m_{k-1})\circ\cdots\circ C(1,m_2)\circ T(1,m_1)$$ is a genus-minimizing knot in the homology class $m_1m_2\cdots m_k$. 
\end{cor}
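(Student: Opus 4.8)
The plan is to induct on $k$, using the argument of the preceding theorem as the inductive step. For $k=1$ the knot in question is $T(1,m_1)$, which by Lemma~\ref{lem:TorusCondition} (applicable since the hypothesis gives $m_1\le p/q$) is the simple knot in the homology class $m_1$, hence a genus minimizer. For the step, set $w=m_1\cdots m_k$ and $w'=m_1\cdots m_{k-1}$, let $K'=C(1,m_{k-1})\circ\cdots\circ C(1,m_2)\circ T(1,m_1)$, and note that the iterated torus knot is $K=C(1,m_k)\circ K'$, a knot in $U_1$ of winding number $w$. The ratio of the bound for $K$ to the bound for $K'$ is $m_1^2\cdots m_{k-1}^2m_k/(m_1^2\cdots m_{k-2}^2m_{k-1})=m_{k-1}m_k\ge4$, so the hypothesis $p/q\ge m_1^2\cdots m_{k-1}^2m_k$ implies $p/q\ge m_1^2\cdots m_{k-2}^2m_{k-1}$; thus by induction $K'$ is a genus minimizer in the class $w'$, with the same rational genus as the simple knot $T(1,w')$ (which is indeed the simple knot in the class $w'$, since $w'\le p/q$).

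Next I will record the two relevant Thurston-norm values via Lemmas~\ref{lem:BoundarySlope} and~\ref{lem:GraphNorm}, exactly as in the preceding theorem. The complement of $T(1,w)$ is Seifert fibered over the disk with two cone points, of orders $w$ and $p-qw>0$, and Lemmas~\ref{lem:BoundarySlope} and~\ref{lem:GraphNorm} show that a minimal rational Seifert surface represents $w^2q[\mu]+p[\lambda]$ and has norm $(p-qw)(w-1)-w$; similarly, since $K'$ is a genus minimizer with the rational genus of $T(1,w')$, the class $w'^2q[\mu_{K'}]+p[\lambda_{K'}]$ has $\chi_-=(p-qw')(w'-1)-w'$ in $L(p,q)-\nu(K')$. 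I will then analyze $M=L(p,q)-\nu(K)=N\cup M'$, where $M'=L(p,q)-\nu(K')$ and $N=\nu(K')-\nu(K)$ is the cable space of the $(1,m_k)$-cable, Seifert fibered over the annulus with one cone point of order $m_k$. Taking $\partial F$ in the class $w^2q[\mu_K]+p[\lambda_K]$ and tracking it across $N$ by the same computation used in the theorem's proof (which depends only on the cable $(1,m_k)$ and the winding number $w'$ of $K'$), the surface $F\cap\partial\nu(K')$ represents $m_k\bigl(w'^2q[\mu_{K'}]+p[\lambda_{K'}]\bigr)$. By homogeneity of the Thurston norm this gives $\chi_-(F\cap M')=m_k\bigl((p-qw')(w'-1)-w'\bigr)$, while $N$ contributes $(pm_k-w^2q)(1-\frac1{m_k})$ by Lemma~\ref{lem:GraphNorm}. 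Adding and simplifying, using $w=m_kw'$, yields $(pm_k-w^2q)(1-\frac1{m_k})+m_k\bigl((p-qw')(w'-1)-w'\bigr)=(p-qw)(w-1)-w$, the norm of $T(1,w)$; dividing by $p$ gives $2g_r(K)=2g_r(T(1,w))=\Theta(w)$, so $K$ is a genus minimizer in the homology class $w=m_1\cdots m_k$.

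The work is essentially bookkeeping rather than anything conceptual: the one point requiring care is to check that the bound $p/q\ge m_1^2\cdots m_{k-1}^2m_k$, with precisely those exponents, ensures at every stage of the induction that all cone-point orders are positive, that each base orbifold has negative orbifold Euler characteristic (so that Lemma~\ref{lem:GraphNorm} applies and the minimizer is a fiber), and that $pm_j-(m_1\cdots m_j)^2q\ge0$ for $j=1,\dots,k$, so the various fiber-intersection numbers carry the stated signs; this is accompanied by the one-line algebraic identity above. The equality case $p/q=m_1^2\cdots m_{k-1}^2m_k$, in which the outermost cable space $N$ contributes no norm, should be dealt with exactly as the corresponding equality case in the preceding theorem. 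An alternative, non-inductive proof would instead read off $\chi_-$ of $M$ directly from its full JSJ decomposition---the Seifert piece $L(p,q)-\nu(T(1,m_1))$ together with the chain of cable spaces coming from $C(1,m_2),\dots,C(1,m_k)$---but phrasing it as an induction isolates the only genuinely new ingredient, namely that passing the rational Seifert surface through one more cable space scales its relevant homology class, hence its norm, by $m_k$.
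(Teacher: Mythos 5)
Your proposal is correct and follows essentially the same route as the paper, whose proof consists of the single remark that one inducts on $k$ and reuses the argument of the preceding theorem; your write-up simply makes explicit the bookkeeping (the class scaling by $m_k$ across the outermost cable space, the resulting identity $(pm_k - qw^2)(1-\tfrac1{m_k}) + m_k\bigl((p-qw')(w'-1)-w'\bigr) = (p-qw)(w-1)-w$, and the verification that the hypothesis on $p/q$ propagates down the induction), all of which checks out.
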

\begin{proof}
The proof proceeds by induction on $k$. The argument in the above theorem shows that the corresponding iterated torus knot is a genus minimizer.
\end{proof}

\begin{figure}[ht]
\centering
\includegraphics[height=160pt]{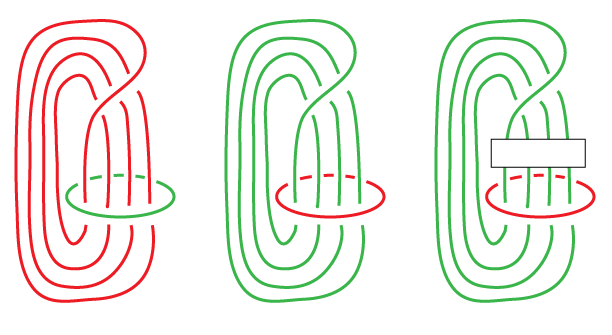}
\put(-214,54){$m^2n$}
\put(-240,154){$\infty,\textcolor{blue}{\frac1n}$}
\put(-250,5){$\textcolor{red}{K_1}$}
\put(-210,73){$\textcolor{green}{L_2}$}
\put(-140,154){$m^2n$}
\put(-114,54){$\infty,\textcolor{blue}{\frac1n}$}
\put(-110,73){$\textcolor{red}{K_1}$}
\put(-150,5){$\textcolor{green}{L_2}$}
\put(-40,154){$0$}
\put(-14,54){$-\frac1n,\textcolor{blue}{\infty}$}
\put(-43,84){$-n$}
\put(-10,73){$\textcolor{red}{K'_1}$}
\put(-50,5){$\textcolor{green}{L'_2}$}
\caption{\label{fig:CableKnot}A Kirby diagram for $K_0$ when $\frac pq=m^2n$. Here we choose $m=4$. On the left and in the middle, $K_0$ is the $(1,n)$--cable of $K_1$. On the right, $K_0$ is the meridian of $K_1'$, and the $-n$ in the box indicates a $-n$ full twist.}
\end{figure}

\begin{rem}
When $p=m^2n$, $q=1$, we get
$$
2g_r(K_1)=\Theta(m)=\frac{mn-n-1}n, \quad 2g_r(K_2)=\Theta(mn)=mn-n-1.
$$
A minimal genus rational Seifert surface for $K_0$ can be visualized explicitly by Kirby calculus.  On the left of Figure~\ref{fig:CableKnot}, we represent the knot $K_0$ by a surgery diagram. Here we put two slopes on $K_1$. One slope $\infty$ indicates the surgery slope which gives the ambient manifold, and the other slope $\frac1n$ indicates the slope of $K_0$ on $\partial\nu(K_1)$. This diagram is identical to the picture in the middle of Figure~\ref{fig:CableKnot}, noting that $K_1 \cup L_2$ is the symmetric torus link $T_{2,2m}$.  The middle picture can then be transformed to the picture on the right of Figure~\ref{fig:CableKnot} by a Rolfsen twist. The knot $L_2'$ is a torus knot $T_{1-mn,m}$, and it has a Seifert surface of genus $(mn-2)(m-1)/2$ that intersects $K_1'$ in $m$ points. This Seifert surface extends to a closed surface in the 0-surgery of $T_{1-mn,m}$. Therefore, the knot $K_0$, which is the meridian of $K_1'$, has a rational Seifert surface $F$ with genus $(mn-2)(m-1)/2$ and $m$ boundary components.  The rational genus of $F$ is precisely the value calculated above for $g_r(K_2) = g_r(K_0)$, so it is a minimal genus rational Seifert surface for $K_0$.
\end{rem}


\section{Torsion curves on the minimal genus nonorientable surface}\label{sect:TorsCurve}

Let $\Pi_h$ denote the closed connected nonorientable surface of genus $h$.  Note that $H_1(\Pi_h)\cong\mathbb Z^{h-1}\oplus\mathbb Z_2$.
The following lemma about the unique torsion class in $H_1(\Pi_h)$ is elementary, and the proof is left to the reader.

\begin{lem}\label{lem:Tors}
The torsion class is the image of the $\mathbb Z_2$--fundamental class under the Bockstein homomorphism $\beta\co H_2(\Pi_h;\mathbb Z_2)\to H_1(\Pi_h;\mathbb Z)$. Moreover, the complement of a simple closed curve $c\subset\Pi_h$ is orientable if and only if $c$ represents the torsion class. \qed
\end{lem}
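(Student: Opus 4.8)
The plan is to handle the two statements separately. For the first, I would invoke the Bockstein sequence of the coefficient sequence $0\to\mathbb{Z}\xrightarrow{\cdot 2}\mathbb{Z}\to\mathbb{Z}_2\to 0$: since $\Pi_h$ is closed, connected, and nonorientable, $H_2(\Pi_h;\mathbb{Z})=0$, so this sequence begins
$$0\longrightarrow H_2(\Pi_h;\mathbb{Z}_2)\xrightarrow{\beta}H_1(\Pi_h;\mathbb{Z})\xrightarrow{\cdot 2}H_1(\Pi_h;\mathbb{Z}),$$
whence $\beta$ is injective with image $\ker(\cdot 2)=\mathrm{Tors}\,H_1(\Pi_h;\mathbb{Z})\cong\mathbb{Z}_2$. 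Since $H_2(\Pi_h;\mathbb{Z}_2)\cong\mathbb{Z}_2$ is generated by the fundamental class, $\beta$ sends it to the unique torsion class.

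For the second statement, let $N$ be the compact surface obtained by cutting $\Pi_h$ along $c$, with quotient map $q\co N\to\Pi_h$; since $\mathrm{int}\,N\cong\Pi_h\setminus c$, orientability of $N$ and of the complement of $c$ are the same thing. To prove that an orientable complement forces $[c]$ to be the torsion class: if $c$ were separating then $\Pi_h$ would be a union of the two (now orientable) pieces of $N$ along a circle, hence orientable, a contradiction; so $c$ is nonseparating, $N$ is connected, and $[c]\neq 0$. It then remains to show $2[c]=0$. On one hand, as $N$ is a compact orientable surface, the class of $\partial N$ in $H_1(\partial N;\mathbb{Z})$, being $\partial$ of the relative fundamental class, dies in $H_1(N;\mathbb{Z})$. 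On the other hand $q|_{\partial N}$ is a degree $\pm2$ map onto $c$: if $c$ is one-sided, $\partial N$ is a single circle double covering $c$; if $c$ is two-sided, $\partial N=c_1\sqcup c_2$ and the two homeomorphisms $c_i\to c$ carry the boundary orientations to the \emph{same} orientation of $c$, for otherwise $\Pi_h$, obtained from $N$ by an orientation-reversing gluing of its two boundary circles, would be orientable. Since $\partial N\hookrightarrow N\xrightarrow{q}\Pi_h$ and $\partial N\xrightarrow{q}c\hookrightarrow\Pi_h$ coincide, pushing $[\partial N]$ forward along each gives $\pm2[c]=q_*(0)=0$; together with $[c]\neq 0$ this identifies $[c]$ with the torsion class. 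I expect this degree $\pm2$ assertion to be the delicate point: it is precisely the orientation bookkeeping for surfaces glued along boundary circles, and it is where nonorientability of $\Pi_h$ is used.

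For the converse, I would use Stiefel--Whitney classes. With $i\co N\hookrightarrow\Pi_h$ one has $w_1(N)=i^*w_1(\Pi_h)$ because $N$ has codimension zero, and $N\simeq\Pi_h\setminus c$, so $N$ is orientable iff $w_1(\Pi_h)$ restricts to $0$ on $\Pi_h\setminus c$, iff $w_1(\Pi_h)$ lies in the image of $H^1(\Pi_h,\Pi_h\setminus c;\mathbb{Z}_2)\to H^1(\Pi_h;\mathbb{Z}_2)$. By excision and the Thom isomorphism for the normal line bundle of $c$, that image is $\{0,\mathrm{PD}([c]_2)\}$, where $[c]_2$ is the mod $2$ reduction of $[c]$ and $\mathrm{PD}$ is mod $2$ Poincar\'e duality; and since $\Pi_h$ is nonorientable, $w_1(\Pi_h)\neq 0$, so $N$ is orientable iff $\mathrm{PD}^{-1}w_1(\Pi_h)=[c]_2$. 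Finally $\mathrm{PD}^{-1}w_1(\Pi_h)=\beta_2([\Pi_h])$, the homological form of the Wu relation $v_1=w_1$, and $\beta_2$ is reduction mod $2$ composed with the integral Bockstein $\beta$, so by the first part $\mathrm{PD}^{-1}w_1(\Pi_h)$ is the mod $2$ reduction of the torsion class. Thus if $[c]$ is the torsion class, then $[c]_2=\mathrm{PD}^{-1}w_1(\Pi_h)$ and $N$ is orientable, which completes the argument.
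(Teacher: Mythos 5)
The paper gives no proof of this lemma---it is explicitly ``left to the reader''---so the only question is whether your argument is sound, and it is. The Bockstein computation is exactly right: $H_2(\Pi_h;\mathbb Z)=0$ forces $\beta$ to be injective with image $\ker(\cdot 2)=\mathrm{Tors}\,H_1(\Pi_h;\mathbb Z)\cong\mathbb Z_2$, which pins down the image of the fundamental class. For the forward direction of the second claim, the cut-and-reglue bookkeeping is the genuinely delicate point and you handle it correctly: the separating case is excluded because two orientable pieces glued along one circle can always be co-oriented, and in the two-sided nonseparating case the gluing must identify the two boundary orientations coherently (else the orientation of $N$ would descend), so $[\partial N]\mapsto \pm 2[c]$ while $[\partial N]=\partial[N,\partial N]$ dies in $H_1(N;\mathbb Z)$; together with $[c]\neq 0$ and $\mathrm{Tors}\,H_1\cong\mathbb Z_2$ this identifies $[c]$ with the torsion class. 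The converse via $w_1$, excision, the Thom class of the normal bundle of $c$, and the homological Wu relation $\mathrm{PD}(w_1)=\rho_2\beta[\Pi_h]$ is correct but considerably heavier machinery than the statement requires; a more elementary route is to exhibit one torsion curve with orientable complement in the standard crosscap model of $\Pi_h$ (the curve passing once through each crosscap, whose class is $x_1+\dots+x_h$, the torsion element) and then note that any simple closed curve representing the torsion class is taken to it by a homeomorphism, or simply to reverse your own cut-and-paste argument. Either way, the logic is airtight: you prove each implication separately, so the non-injectivity of mod $2$ reduction never causes trouble.
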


\begin{defn}
A simple closed curve $c\subset \Pi_h$ is called a {\it torsion curve} if it represents the torsion class.
\end{defn}

When the homology class $a$ has order $2$, the function $\Theta$ is essentially the minimal genus of nonorientable connected surfaces in a given homology class. 

\begin{thm}\label{thm:NOgenus}
Let $Y$ be a rational homology $3$--sphere and $$\beta\co H_2(Y;\mathbb Z_2)\to H_1(Y;\mathbb Z)$$ the Bockstein homomorphism. Given a nonzero class $A\in H_2(Y;\mathbb Z_2)$, let $h(Y,A)$ be the minimal genus of closed, connected, nonorientable surfaces representing $A$.  If $h(Y,A)\ge2$, then we have
$$h(Y,A)=2\Theta(\beta(A))+2.$$
\end{thm}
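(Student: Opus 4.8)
The plan is to establish the equality $h(Y,A) = 2\Theta(\beta(A)) + 2$ by proving the two inequalities separately, translating between closed nonorientable surfaces representing $A$ and rational Seifert surfaces for knots representing $\beta(A)$.

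\textbf{From a nonorientable surface to a knot.} Suppose $\Sigma \subset Y$ is a closed, connected, nonorientable surface of genus $h = h(Y,A)$ representing $A$. Since $\Sigma$ is nonorientable, it contains an orientation-reversing simple closed curve; by Lemma~\ref{lem:Tors}, a simple closed curve $c \subset \Sigma$ has orientable complement in $\Sigma$ if and only if $c$ is a torsion curve, i.e.\ represents $\beta(A_\Sigma)$ where $A_\Sigma \in H_2(\Sigma;\mathbb Z_2)$ is the fundamental class. I would pick such a torsion curve $c$ (first showing one exists and can be chosen so that $\Sigma \setminus c$ is connected — this uses that $h \ge 2$ so that $\Sigma$ is not $\mathbb{RP}^2$). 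Pushing $c$ off $\Sigma$ into a knot $K = c$ in $Y$, the surface $\Sigma$ cut along $c$ becomes an orientable surface $F$ with two boundary components, each a copy of $c$, sitting in $Y \setminus \nu(K)$; after a small isotopy these become the boundary slope that makes $F$ a rational Seifert surface for $K$. One checks $[K] = \beta(A)$ in $H_1(Y)$ from the Bockstein description in Lemma~\ref{lem:Tors}. Since $\chi(F) = \chi(\Sigma) = 2 - h$ and $F$ has boundary that wraps the right number of times, $2g_r(K) \le \frac{\max\{0, h-2\}}{(\text{something})} = h - 2$ — here I need to check that $|[\mu] \cdot [\partial F]| = 1$, which should follow because $c$ is primitive-ish / a torsion curve and $F$ is obtained by cutting along a single curve. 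This gives $2\Theta(\beta(A)) \le h - 2$, i.e.\ $h(Y,A) \ge 2\Theta(\beta(A)) + 2$.

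\textbf{From a knot to a nonorientable surface.} Conversely, let $K \subset Y$ represent $\beta(A)$ with $2g_r(K) = \Theta(\beta(A))$, realized by a rational Seifert surface $F$. Because $\beta(A)$ has order $2$ in $H_1(Y)$, the knot $K$ has a rational Seifert surface whose boundary consists of exactly two parallel copies of the relevant curve on $\partial\nu(K)$ (the minimal number consistent with order $2$; here I would argue $|[\mu]\cdot[\partial F]|$ can be taken to be $1$, so $\partial F$ is a single curve wrapping twice around $K$ longitudinally — or two curves, depending on conventions). Gluing the two boundary components of $F$ together via the annulus in $\partial\nu(K)$ between them (or capping along an appropriate identification) produces a closed surface $\Sigma \subset Y$ with $\chi(\Sigma) = \chi(F)$, hence of nonorientable genus $2 - \chi(F) = 2 + 2g_r(K)\cdot(\text{norm factor})$. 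The surface $\Sigma$ is nonorientable precisely because the gluing reverses orientation — which is exactly the condition that $K$ represents a $2$-torsion class via the Bockstein (this is the reverse direction of Lemma~\ref{lem:Tors}). Checking $[\Sigma] = A$ in $H_2(Y;\mathbb Z_2)$ and connectedness of $\Sigma$ (possibly after tubing, using $h \ge 2$) finishes this direction: $h(Y,A) \le 2\Theta(\beta(A)) + 2$.

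\textbf{Main obstacle.} The delicate point is bookkeeping the boundary slope and the multiplicity $|[\mu]\cdot[\partial F]|$: I must argue that for a knot in a $2$-torsion class the genus-minimizing rational Seifert surface can be taken to have $|[\mu]\cdot[\partial F]| = 1$ (equivalently $2$ boundary components, equivalently that the denominator in $g_r$ is exactly $2$), so that the $\max\{0,-\chi\}$ in the definition of $g_r$ lines up precisely with $h - 2$ under the cut-and-paste correspondence. Relatedly, I need the hypothesis $h(Y,A) \ge 2$ to rule out the $\mathbb{RP}^2$ case where the correspondence degenerates (a single curve whose complement in $\mathbb{RP}^2$ is a disk, giving $\chi$-considerations that would force the wrong constant), and to guarantee connectedness can be arranged on both sides by tubing without changing the relevant homology class. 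Everything else is a routine Euler-characteristic computation via the cut/glue construction together with the already-established Lemma~\ref{lem:Tors} and Lemma~\ref{lem:Tors}'s identification of torsion curves.
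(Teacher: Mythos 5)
Your overall strategy---cut a minimal genus nonorientable surface along a torsion curve to get a rational Seifert surface, and conversely cap off a minimal rational Seifert surface to a closed nonorientable surface---is exactly the paper's. The genuine gap is the step you yourself flag as the ``main obstacle'': you propose to resolve it by showing $|[\mu]\cdot[\partial F]|=1$, but that value is wrong, and it is wrong in precisely the way that accounts for the factor of $2$ in the statement. Since $[\partial F]$ bounds in $Y-\nu(K)$, its image $([\mu]\cdot[\partial F])\,[K]$ in $H_1(Y)$ must vanish, so $|[\mu]\cdot[\partial F]|$ is a multiple of the order of $[K]=\beta(A)$, which is $2$; the relevant value is $2$, never $1$. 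Concretely, when you cut $\Pi$ along the torsion curve $c$, either $\Pi\cap\nu(c)$ is an annulus---in which case the two boundary copies of $c$ are \emph{coherently} oriented on $\partial\nu(c)$ (because $\Pi-c$ is orientable while $\Pi$ is not), contributing $2$---or it is a M\"obius band, a case you omit entirely (the torsion curve may be one-sided in $\Pi$), in which case $\partial F$ is a single curve winding twice longitudinally, again contributing $2$. With $|[\mu]\cdot[\partial F]|=2$ and $-\chi(F)=h-2$ the definition $g_r=\max\{0,-\chi\}/(2|[\mu]\cdot[\partial F]|)$ gives $\Theta(\beta(A))\le 2g_r(K)\le (h-2)/2$, i.e.\ $h\ge 2\Theta+2$; with your value $1$ you would only get $\Theta\le h-2$, which is strictly weaker and cannot yield the claimed equality. (Note also that your stated conclusion ``$2\Theta(\beta(A))\le h-2$'' does not even follow from your own displayed inequality $2g_r(K)\le h-2$.)

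The same confusion recurs in the converse direction, where you write that $|[\mu]\cdot[\partial F]|$ can be taken to be $1$ ``so $\partial F$ is a single curve wrapping twice around $K$ longitudinally''---these two clauses contradict each other, since $[\mu]\cdot[\partial F]$ \emph{is} the total longitudinal winding. The unnamed ``norm factor'' in your formula is again this multiplicity $2$: for a minimal genus rational Seifert surface $F$ of a knot in an order-$2$ class one has $|[\mu]\cdot[\partial F]|=2$, one glues in a M\"obius band or an annulus inside $\nu(K)$ according to whether $|\partial F|=1$ or $2$, and then $-\chi(F)=4g_r(K)=2\Theta(\beta(A))$, so $h\le 2-\chi(F)=2\Theta(\beta(A))+2$. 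Your secondary worries are non-issues: a torsion curve is nonzero in $H_1(\Pi;\mathbb Z)$, hence non-separating, so the cut surface is automatically connected, and $h\ge 2$ is used only to ensure $-\chi(F)=h-2\ge 0$ so that the $\max\{0,\cdot\}$ in the definition of $g_r$ is harmless. But as written, the central bookkeeping step of your argument is both unproved and incorrect as stated.
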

\begin{proof}
Suppose that $\Pi\subset Y$ is a minimal genus, closed, nonorientable surface representing $A$ and $K\subset \Pi$ is a torsion curve.  Lemma~\ref{lem:Tors} implies that $\Pi - K$ is orientable and $[K]=\beta[\Pi]$. Since $\Pi-K$ is orientable and $\Pi$ is nonorientable, either $\Pi\cap \nu(K)$ is a M\"obius band or $\Pi\cap \nu(K)$ is an annulus and the induced orientations of the two components of $\partial(\Pi\cap \nu(K))$ are coherent on $\partial\nu(K)$. So $\Pi\cap(Y-\nu(K))$ is a rational Seifert surface for $K$ with $|[\mu]\cdot[\partial (\Pi\cap \nu(K))]|=2$, where $\mu\subset\partial\nu(K)$ is the meridian of $K$. Since the genus of $\Pi$ is $h(Y,A)$, $\chi(\Pi\cap(Y-\nu(K)))=\chi(\Pi)=2-h$.
So $\Theta(\beta(A))=g_r(K)\le (h-2)/2$ if $h\ge2$.

On the other hand, if $a$ is in the image of $\beta$, then the order of $a$ is $2$. Let $K$ be a knot representing $a$ such that $g_r(K)=\Theta(a)$, and let $F$ be a minimal genus rational Seifert surface for $K$. It is not hard to see $|[\mu]\cdot[\partial F]|=2$. Depending on whether $|\partial F|=1$ or $2$, we can glue in a M\"obius band or an annulus to $F$ to get a nonorientable surface $\widehat{F}$. Clearly, $\chi(F)=\chi(\widehat F)$. Since $\beta$ is injective, $[\widehat F]\in H_2(Y;\mathbb Z_2)$ is determined by $[K]=\beta[\widehat F]$. Lastly, $h([\widehat F])\le 2g_r(K)+2=2\Theta(a)+2$, which finishes the proof.
\end{proof}

\begin{proof}[Proof of Theorem~\ref{thm:Order2Minimizer}]
If $K$ is a genus minimizer, the proof of Theorem~\ref{thm:NOgenus} shows that $K$ lies on a surface $\widehat F$ such that $\widehat F$ is a minimal genus, closed, nonorientable surface. Rubinstein's result implies that $\widehat F$ is the unique incompressible surface $\Pi$ in $L(2k,q)$ \cite[Theorem 12]{RubinOS}. The proof of Theorem~\ref{thm:NOgenus} also implies that any torsion curve on $\Pi$ is a genus minimizer in the homology class $k$.

When the genus $N(2k,q)$ of $\Pi$ is  at most 3, there is a unique (up to isotopy in $\Pi$) torsion curve on $\Pi$ \cite[Lemma~2.1]{Sch}, so there is a unique genus minimizer in the homology class $k$.
\end{proof}

We record the following construction for the unique incompressible surface $\Pi \subset L(2k,q)$, which has been independently noted by others \cite{Bakercorr,Rubincorr}.  Let $S(2k,q)$ denote the two-bridge link with branched double-cover $L(2k,q)$.  Put $S(2k,q)$ in alternating two-bridge position, let $D$ denote the disk bounded by one of the link components in the projection plane, and perturb the other component to meet $D$ transversely.  Then the preimage of $D$ under the branched covering map $L(2k,q) \to S^3$ is the surface $\Pi$.  It is straightforward to verify that the resulting surface $\Pi$ is non-orientable, and furthermore that its genus equals $N(2k,q)$ (for example, by using the formula for this value that appears in \cite[Theorem 6.1]{BW}).


\section{Stabilizations}\label{sect:Stab}

More examples of genus minimizers can be constructed by stabilizing the cable examples in Section~\ref{sect:Cable}.
For simplicity, we only consider a special case.

We keep the notation from the first paragraph of Section~\ref{sect:Cable}. Let $K_0=C(1,2)\circ C(1,2)\circ L_1$ be a knot in $U_1$. Let $\gamma$ be a small unknot about the ``bottom-left'' crossing between $K_0$ and $L_2$, as in Figure~\ref{fig:BraidStab}. We orient $K_0$, $L_2$, and $\gamma$ so that the linking number between any two of them is positive.  For any integer $k>0$, $(-\frac1k)$--surgery on $\gamma$ transforms $K_0$ into the $k$-fold positive stabilization of the closed braid $K_0$, denoted $K_k$.

\begin{figure}[ht]
\centering
\includegraphics[height=120pt]{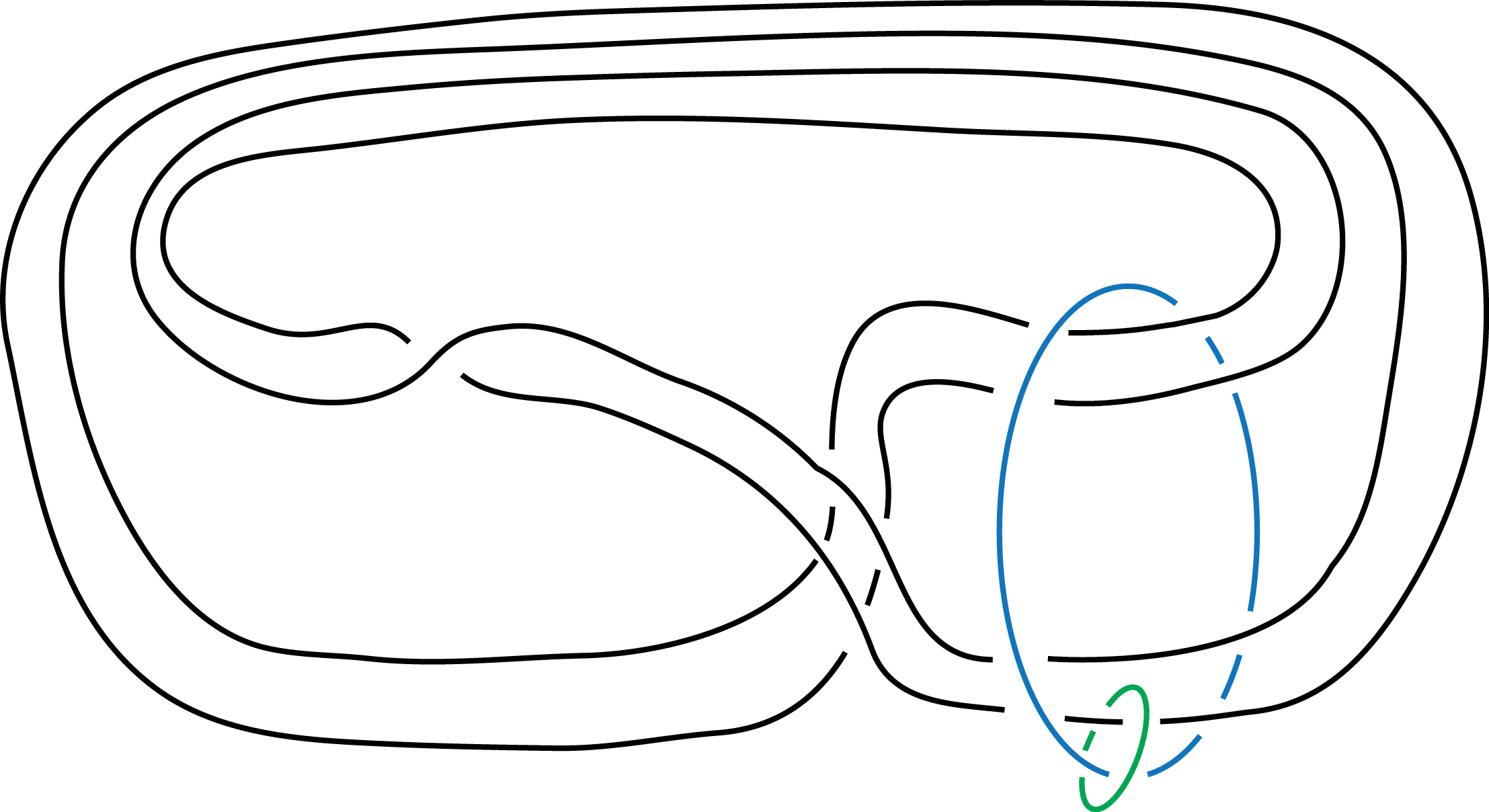}
\put(0,50){$K_0$}
\put(-44,0){$\textcolor{blue}{L_2}$}
\put(-67,-2){$\textcolor{green}{\gamma}$}
\caption{\label{fig:BraidStab}The link $K_0\cup L_2\cup \gamma$. Here $K_0$ is a closed braid with axis $L_2$.}
\end{figure}

There is another way to view the link $K_0\cup L_2\cup \gamma$.
On the left of Figure~\ref{fig:Pi4}, we draw $K_0$ as a curve on a genus $4$ once-punctured non-orientable surface $\Pi_{4,1}\subset U_1$. 
On the right of Figure~\ref{fig:Pi4}, an additional knot $\gamma\subset U_1$ is added to the picture of $K_0\cup L_2$. This 
$3$--component link is isotopic to the link in Figure~\ref{fig:BraidStab}.

\begin{figure}[ht]
\centering
\includegraphics[height=160pt]{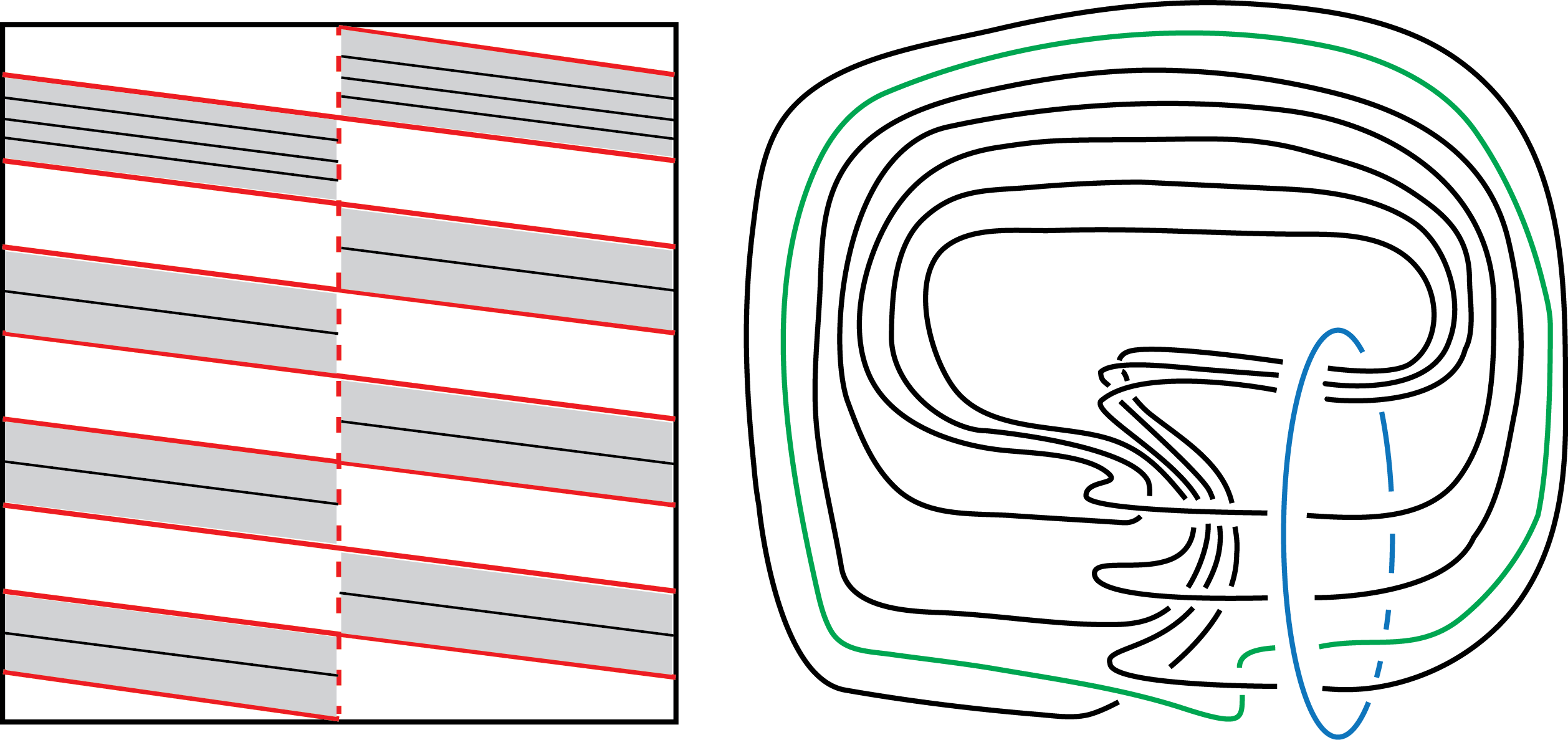}
\put(-264,12){$\scriptstyle1$}
\put(-264,47){$\scriptstyle3$}
\put(-264,85){$\scriptstyle5$}
\put(-272,30){$\scriptstyle2$}
\put(-272,67){$\scriptstyle4$}
\put(-272,104){$\scriptstyle6$}
\put(-272,147){$\scriptstyle1$}
\put(-272,142){$\scriptstyle2$}
\put(-272,137){$\scriptstyle3$}
\put(-264,127){$\scriptstyle4$}
\put(-264,122){$\scriptstyle5$}
\put(-264,117){$\scriptstyle6$}
\put(-43,0){$\textcolor{blue}{L_2}$}
\put(-70,0){$\textcolor{green}{\gamma}$}
\put(0,50){$K_0$}
\caption{\label{fig:Pi4}On the left, we draw a surface $\Pi_{4,1}$ in the solid torus $U_1$. The boundary of $U_1$ is obtained by doubling the square along its horizontal sides, then gluing the two boundary components of the resulting annulus together. The double of the vertical sides of the square become a meridian of $U_1$. Here we think of the new copy (added in the doubling operation) of the square  as lying in the back of the picture. The union of the shaded regions represents the surface, which is a genus $4$ nonorientable surface with one boundary component  of slope $1/8$ on $\partial U_1$. The minimal genus closed non-orientable surface $\Pi\subset L(8,1)$ can be obtained by capping off $\partial\Pi_{4,1}$ with the meridian disk $D$ of $U_2$. There are six embedded arcs in $\Pi_{4,1}$. Joining the ends with same label by disjoint arcs in $D$, we get a torsion curve in $\Pi$ which is isotopic to the image of the knot $K_0$ under the inclusion $U_1\subset L(8,1)$. On the right is the link $K_0\cup L_2\cup \gamma$.}
\end{figure}

Let $(\mu_0,\lambda_0),(\mathcal M,\mathcal L),(\mu_{\gamma},\lambda_{\gamma})$ be the meridian-longitude pairs on $K_0,L_2,\gamma$.
Let $$M=S^3-\nu(K_0\cup L_2\cup\gamma).$$

The knot $K_0\subset S^3$ is an unknot. Clearly, it bounds a disk $D_0$ which intersects $L_2$ exactly four times and intersects $\gamma$ exactly once. Let $F_0=D_0\cap M$.  Then $\chi_-(F_0)=4$. Moreover, under the map $$\partial\co H_2(M,\partial M)\to H_1(\partial M)=H_1(\partial \nu(K_0\cup L_2\cup\gamma)),$$ we have
$$\partial [F_0]=([\lambda_0],-4[\mathcal M], -[\mu_{\gamma}]).$$

The knot $\gamma$ bounds a disk $D_{\gamma}$ which intersects each of $K_0,L_2$ exactly once. Let $F_{\gamma}=D_{\gamma}\cap M$.  Then $\chi_-(F_{\gamma})=1$ and
$$\partial[F_{\gamma}]=(-[\mu_0],-[\mathcal M],[\lambda_{\gamma}]).$$

From Figure~\ref{fig:Pi4}, we see that the knot $\gamma$ can be isotoped to intersect $\Pi_{4,1}$ exactly once. Let $F=\Pi_{4,1}\cap M$.  Then $F$ is a genus one surface with four boundary components and $\chi_-(F)=4$. We have
$$\partial[F]=(4[\mu_0]+2[\lambda_0],-8[\mathcal M]-[\mathcal L],-[\mu_{\gamma}]).$$

Given relatively prime positive integers $p,q$ with $\frac pq\ge2(k+4)$, consider the homology class
$$(p-2q(k+4))[F_0]+k(p-q(k+4))[F_{\gamma}]+q(k+4)[F],$$
which can be represented by an embedded surface
$F_k$ with $$\chi_-(F_k)=4(p-2q(k+4))+k(p-q(k+4))+4q(k+4)=p(k+4)-q(k+4)^2.$$
We have
\begin{eqnarray*}
\partial[F_k]&=&\big(-kp+q(k+4)^2)[\mu_0]+p[\lambda_0],\\
&&\quad-(p-qk)(k+4)[\mathcal M]-q(k+4)[\mathcal L],\\
&&\quad-(p-q(k+4))[\mu_{\gamma}]+k(p-q(k+4))[\lambda_{\gamma}]\big).
\end{eqnarray*}

The effect of the $(-\frac1k)$--surgery on $\gamma$ is to perform $k$ postive twists along $D_{\gamma}$. Moreover, the slope $\frac{-kp+q(k+4)^2}p$ on $K_0$ becomes $\frac{q(k+4)^2}p$, and the slope $\frac{p-qk}q$ on $L_2$ becomes $\frac pq$. Hence if we also do $\frac {p-qk}q$--surgery on $L_2$, we get a knot $K_k \subset U_1\subset L(p,q)$ in the homology class $k+4$.

The surface $F_k$ has $k+4$ boundary components on $\partial\nu(L_2)$ and $p-q(k+4)$ boundary components on $\partial\nu(\gamma)$. After capping off these boundary components, we get a rational Seifert surface $\widehat{F_k}$ for $K_k\subset L(p,q)$ with  
$$\chi_-(\widehat{F_k})=p(k+4)-q(k+4)^2-(k+4)-(p-q(k+4))=(k+3)(p-q(k+4))-(k+4),$$
and the winding number of $\partial\widehat{F_k}$ on $\nu(K_k)$ is $p$.

On the other hand, similarly to (\ref{eq:K2Norm}), the torus knot $T(1,k+4)\subset L(p,q)$  has a rational Seifert surface which is a fiber of the fibration on $L(p,q)-T(1,k)$ with norm 
$$p(k+3)-q(k+4)(k+3)-(k+4)=\chi_-(\widehat{F_k}),$$
and the winding number of its boundary on $\nu(T(1,k+4))$ is $p$. Moreover, by Lemma~\ref{lem:TorusCondition}, $T(1,k+4)$ is a simple knot, so it is genus-minimizing. Hence $K_k$ is also genus-minimizing. This finishes the proof of the following proposition.

\begin{prop}
When $\frac pq\ge2(k+4)$, the knot $K_k\subset L(p,q)$ is a genus minimizer in the homology class $k+4$. \qed
\end{prop}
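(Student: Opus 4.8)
The plan is to compare $K_k$ against the simple knot in its homology class. By Lemma~\ref{lem:TorusCondition}, since $\frac pq\ge2(k+4)>k+4$, the simple knot representing $k+4\in H_1(L(p,q))$ is the torus knot $T(1,k+4)$; being simple it is Floer simple, hence a genus minimizer by \cite{NiWu}, and its rational genus can be computed exactly as in \eqref{eq:K2Norm}: the complement is Seifert fibred over a disk with two cone points, so Lemmas~\ref{lem:BoundarySlope} and~\ref{lem:GraphNorm} give a fibre rational Seifert surface whose norm is $\bigl|p(k+4)-q(k+4)^2\bigr|\bigl(1-\tfrac1{k+4}-\tfrac1{p-q(k+4)}\bigr)=(k+3)(p-q(k+4))-(k+4)$, with meridional intersection number $p$. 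It therefore suffices to produce a rational Seifert surface for $K_k$ with the same normalized norm; then $2g_r(K_k)\le\Theta(k+4)$, and since $\Theta(k+4)\le 2g_r(K_k)$ trivially, $K_k$ is a genus minimizer.

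Concretely I would first realize $K_k$ by surgery: starting from the closed braid $K_0=C(1,2)\circ C(1,2)\circ L_1$ in $U_1$ with axis $L_2$ (an index~$4$ braid), perform $(-\tfrac1k)$--surgery on the small unknot $\gamma$ of Figure~\ref{fig:BraidStab} to $k$--fold positively stabilize the braid, and simultaneously $\tfrac{p-qk}{q}$--surgery on $L_2$; the net effect is $\tfrac pq$--surgery on the twisted axis, producing $L(p,q)$ with $K_k\subset U_1$ an index-$(k+4)$ braid, hence in homology class $k+4$. Then, working in $M=S^3-\nu(K_0\cup L_2\cup\gamma)$, I would record the three surfaces $F_0$ (from the disk $K_0$ bounds in $S^3$, meeting $L_2$ four times and $\gamma$ once), $F_\gamma$ (from the disk $\gamma$ bounds), and $F=\Pi_{4,1}\cap M$ (from the once-punctured nonorientable surface of Figure~\ref{fig:Pi4}), along with $\chi_-(F_0)=4$, $\chi_-(F_\gamma)=1$, $\chi_-(F)=4$ and their images under $\partial\co H_2(M,\partial M)\to H_1(\partial M)$.

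Next I would form the integer class $(p-2q(k+4))[F_0]+k(p-q(k+4))[F_\gamma]+q(k+4)[F]$, which has nonnegative coefficients because $\tfrac pq\ge2(k+4)$, realize it by an embedded surface $F_k$, and compute $\partial[F_k]$. The surgery coefficients are chosen precisely so that the resulting boundary slope on $K_0$ becomes $\tfrac{q(k+4)^2}{p}$ and that on $L_2$ becomes $\tfrac pq$. After the two surgeries the $k+4$ boundary curves of $F_k$ on $\partial\nu(L_2)$ and the $p-q(k+4)$ boundary curves on $\partial\nu(\gamma)$ bound disks; capping these off yields a rational Seifert surface $\widehat{F_k}$ for $K_k\subset L(p,q)$ with $\chi_-(\widehat{F_k})=p(k+4)-q(k+4)^2-(k+4)-(p-q(k+4))=(k+3)(p-q(k+4))-(k+4)$ and meridional intersection number $p$. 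This matches the value computed for $T(1,k+4)$, which finishes the comparison.

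The main obstacle is the middle step: verifying that this integer combination is genuinely carried by an \emph{embedded} surface whose norm is the full sum $4(p-2q(k+4))+k(p-q(k+4))+4q(k+4)$ with no cancellation, and that $\widehat{F_k}$ is connected so that it legitimately computes $g_r$. I would handle this either by an explicit oriented cut-and-paste along the double curves of $F_0$, $F_\gamma$, $F$ with the given multiplicities and a direct Euler-characteristic count from the coefficients, or — more conceptually — by observing that the class lies in the cone over a fibred face of the Thurston norm of $M$, so that after surgery $L(p,q)-\nu(K_k)$ fibres with fibre $\widehat{F_k}$, exactly as in the proof of Lemma~\ref{lem:GraphNorm}. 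The accompanying bookkeeping of boundary homology classes on the three tori $\partial\nu(K_0)$, $\partial\nu(L_2)$, $\partial\nu(\gamma)$ under the two surgeries is routine but is the part demanding care; granting it, the remainder is the arithmetic above.
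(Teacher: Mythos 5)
Your proposal is correct and follows essentially the same route as the paper: the same three surfaces $F_0$, $F_\gamma$, $F$ in $M=S^3-\nu(K_0\cup L_2\cup\gamma)$, the same integral combination with coefficients $p-2q(k+4)$, $k(p-q(k+4))$, $q(k+4)$, the same capping-off to produce $\widehat{F_k}$, and the same comparison with the simple knot $T(1,k+4)$. The embeddedness and norm-additivity issue you flag is indeed passed over in the paper as well, but note that for the upper bound $2g_r(K_k)\le\Theta(k+4)$ any loss of $\chi_-$ under cut-and-paste only works in your favor, so the argument is not endangered by it.
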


Using SnapPy \cite{CDW}, we check that $M$ is a hyperbolic manifold. By the hyperbolic Dehn surgery theorem \cite{ThNotes}, $K_k$ is a hyperbolic knot in $L(p,q)$ for a generic triple $(p,q,k)$. Therefore, $K_k$ is generally not isotopic to $T(1,k+4)$.


\section{Infinitely many homologous genus minimizers}\label{sect:InfiniteMany}

In this section, we establish the second statement in Theorem \ref{thm:NonUnique} by studying the knots that arise from torsion curves on the unique incompressible surface $\Pi \subset L(2k,1)$.

We begin by recalling Osoinach's annulus twist construction \cite{Osoinach}.  Let $Y$ be a closed, oriented 3-manifold and $A \subset Y$ an embedded annulus with boundary $\partial A = \alpha_0 \cup \alpha_1$.  The boundary component $\alpha_i$ has a meridian $\mu_i \subset \partial \nu(\alpha_i)$ and a parallel push-off $\lambda_i = \partial \nu(\alpha_i) \cap  A$, and we orient these curves so that $\mu_i \cdot \lambda_i = +1$.  Osoinach observes that surgery on the link $\alpha_0 \cup \alpha_1 \subset Y$, with framing $\mu_i + \frac{(-1)^i}n \lambda_i$ on $\alpha_i$, results in a 3-manifold $Y_n$ homeomorphic to $Y$ for all $n \in \bZ$.  In fact, the homeomorphism $Y_n \cong Y$ can be taken to be the identity outside of a regular neighborhood of $A$.  We refer to this homeomorphism as the result of doing $n$ {\em twists along} $A$.

Now suppose that $\Pi \subset Y$ is a closed, embedded surface and $\alpha \subset \Pi$ an embedded curve with an orientable (that is, annular) regular neighborhood $N \subset \Pi$.  Then $N$ is co-orientable, and we can push $\alpha$ off of $N$ in the two normal directions to obtain a pair of oriented curves that cobound an annulus $A$ meeting $\Pi$ precisely in $\alpha$.  We can therefore use $A$ to perform the annulus twist construction.  An annular twist of $Y$ along $A$ takes $\Pi$ to itself, and restricting the homeomorphism to $\Pi$ has the effect of doing a Dehn twist along $\alpha$.

We apply this construction in the following way.  Take a disk $D \subset S^3$ and attach $k = a+b+2$ positive half-twisted bands to its boundary, where $a,b \in \bZ^+$.  The result is a non-orientable surface $F \subset S^3$ such that $b_1(F) = k$, $\del F$ is an unknot, and $F$ induces the framing $2k$ on $\del F$.  Choose a curve $\gamma \subset F$  that runs once across each half-twisted band and a curve  $\alpha \subset F$ that runs once across each of two of the bands that are $a+1$ apart and zero times across every other band.  Note that $\gamma$ is uniquely determined up to isotopy within $F$ and so is $\alpha$ once we fix the specific pair of bands that it crosses.  We position $\alpha$ and $\gamma$ so that they intersect minimally; since $a,b \ge 1$, they meet in two points.  See Figure \ref{fig:L(2k,1)}.

\begin{figure}[h]
\centering
\includegraphics[height=2in]{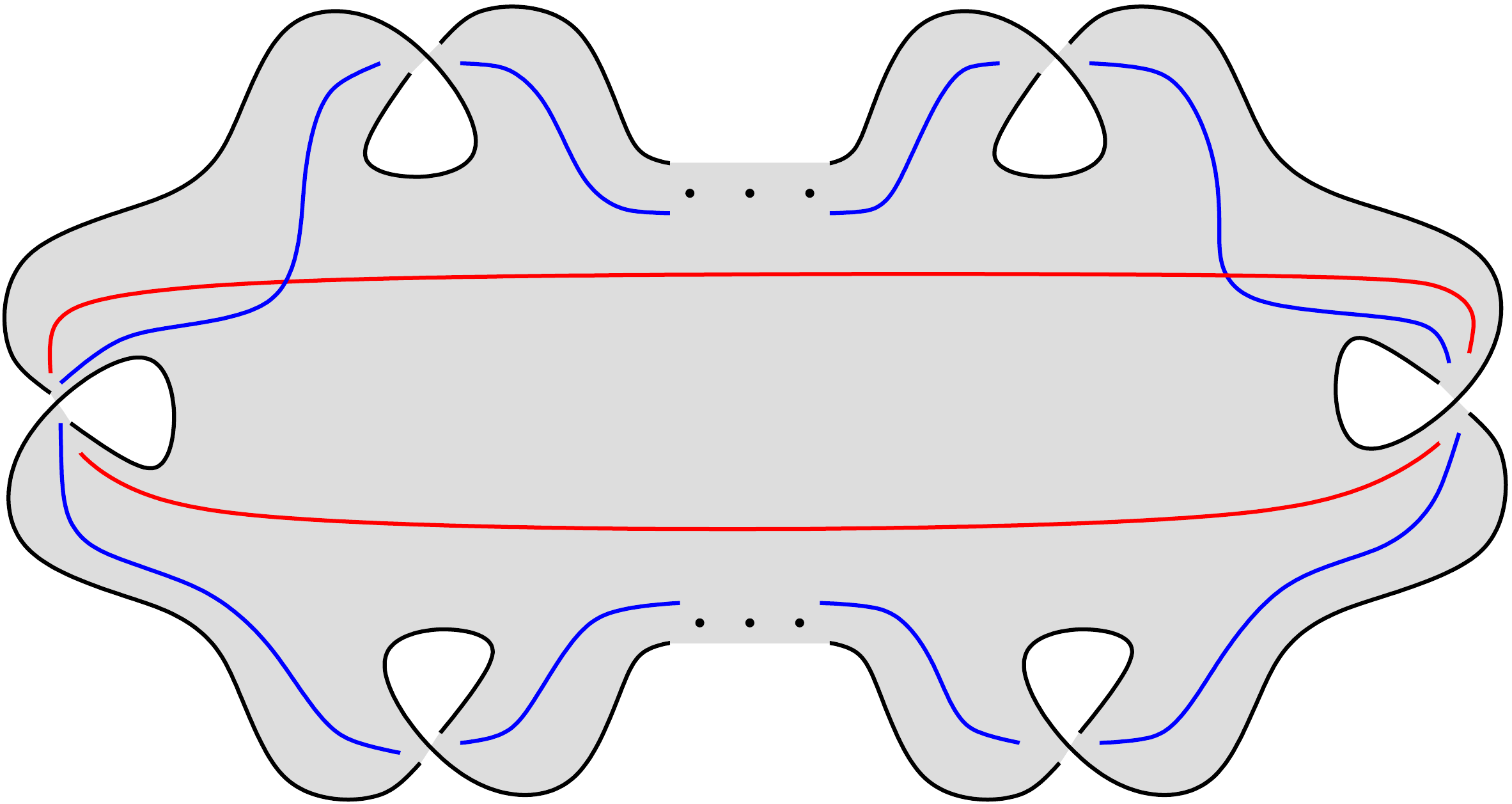}
\put(-105,110){\textcolor{blue}{$\gamma$}}
\put(-70,100){{\textcolor{red}{$\alpha$}}}
\put(-30,120){$2k$}
\caption{\label{fig:L(2k,1)}The surface $F \subset S^3$ shown here consists of a disk with $a \ge 1$ positively half-twisted bands attached to the top, $b \ge 1$ to the bottom, one to the left, and one to the right.  The value $k$ in the surgery coefficient on $\del F$ equals $a+b+2$.}
\end{figure}

The lens space $L(2k,1)$ results from $2k$--surgery along $\del F$, and we obtain the unique incompressible surface $\Pi_k \subset L(2k,1)$ by gluing $F$ to a meridional disk in the surgery solid torus.  We identify $\gamma$ and $\alpha$ with the induced curves on $\Pi_k$.  Observe that $\gamma$ is a torsion curve in $\Pi_k$ and that a regular neighborhood of $\alpha$ in $\Pi_k$ is orientable.  Apply the annulus twist construction as above.  The image of the curve $\gamma$ under $n$ twists along $A$ is a knot $K(a,b,n) \subset L(2k,1)$.  Equivalently, $K(a,b,n)$ is the image of $\gamma \subset \Pi_k$ following $n$ Dehn twists of $\Pi_k$ along $\alpha$.  Note that from the symmetry of the construction, $K(a,b,n)$ is uniquely determined up to isotopy; that is, it does not depend on the specific pair of bands at distance $a+1$ that $\alpha$ crosses.

\begin{thm}\label{thm:InfiniteExamples}

For all $a,b \in \bZ^+$, $n \in \bZ$, $k=a+b+2$, the knots $K(a,b,n)$ are genus minimizers in the homology class $k \in H_1(L(2k,1))$.  For fixed $a,b \gg 0$, the knots $K(a,b,n)$ represent infinitely many distinct hyperbolic knot types in $L(2k,1)$.

\end{thm}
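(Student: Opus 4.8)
\emph{First assertion.} The plan here is short: one observes that $K(a,b,n)$ is a torsion curve on $\Pi_k$ and invokes Theorem~\ref{thm:Order2Minimizer}. Indeed, $K(a,b,n)$ is by construction the image of $\gamma$ under the $n$--th power of the Dehn twist of $\Pi_k$ along $\alpha$, so it is a simple closed curve on $\Pi_k$ with $\Pi_k-K(a,b,n)\cong\Pi_k-\gamma$ orientable; by Lemma~\ref{lem:Tors} this means $K(a,b,n)$ represents the torsion class of $H_1(\Pi_k;\bZ)$. Since $\Pi_k$ is the unique incompressible surface of $L(2k,1)$ and $k=a+b+2\ge2$, the proof of Theorem~\ref{thm:Order2Minimizer} shows that every torsion curve on $\Pi_k$ is a genus minimizer in the homology class $k$, and applying this to $K(a,b,n)$ completes this part. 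Note that no lower bound on $a,b$ is needed.

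\emph{Second assertion.} Fix $a$ and $b$, and set $Z=L(2k,1)-\nu(\gamma\cup\alpha_0\cup\alpha_1)$. Because $\alpha_0$ and $\alpha_1$ are push-offs of $\alpha$ away from $\Pi_k$, the three curves $\gamma,\alpha_0,\alpha_1$ are pairwise disjoint, so $Z$ is a compact $3$--manifold with three torus boundary components, and it is independent of $n$. From the description of the annulus twist recalled above, doing $n$ twists along $A$ amounts to Dehn filling the cusps $\del\nu(\alpha_0)$ and $\del\nu(\alpha_1)$ of $Z$ along the slopes determined by $\mu_i+\tfrac{(-1)^i}{n}\lambda_i$ while leaving the cusp $\del\nu(\gamma)$ unfilled, and the resulting manifold is $L(2k,1)-\nu(K(a,b,n))$. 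As $|n|\to\infty$ these two slopes leave every finite set of slopes (they converge to the meridians of $\alpha_0$ and $\alpha_1$). Thus, for fixed $a,b$, all of the complements $L(2k,1)-\nu(K(a,b,n))$, $n\in\bZ$, are Dehn fillings of the single manifold $Z$.

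The crux of the argument, and the step I expect to require the most care, is showing that $Z$ is hyperbolic once $a$ and $b$ are large. The plan is to realize $Z$ as a Dehn filling of a \emph{fixed} cusped $3$--manifold $W$: in the diagram of Figure~\ref{fig:L(2k,1)}, replace the family of $a$ parallel half-twisted bands at the top of $F$ by a single twist region encircled by a crossing circle $t_a$, and likewise the family of $b$ bands at the bottom by a twist region encircled by $t_b$; let $W$ be the complement in $S^3$ of the resulting link, whose components are $\gamma,\alpha_0,\alpha_1,\del F,t_a,t_b$. A Kirby-calculus computation then expresses $Z$ as the Dehn filling of $W$ in which $\del\nu(t_a),\del\nu(t_b)$ and $\del\nu(\del F)$ are filled along slopes that depend only on $a,b$ and leave every finite set of slopes as $a,b\to\infty$, while $\del\nu(\gamma),\del\nu(\alpha_0),\del\nu(\alpha_1)$ are left unfilled. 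Checking with SnapPy \cite{CDW} that $W$ is hyperbolic and invoking Thurston's hyperbolic Dehn surgery theorem \cite{ThNotes} then yields that $Z$ is hyperbolic for all sufficiently large $a,b$. (If this reduction proves awkward to carry out, an alternative is to show directly that $Z$ is irreducible, atoroidal and not Seifert fibered for $a,b$ large.)

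Finally, fix $a,b\gg0$ so that $Z$ is hyperbolic, and apply Thurston's theorem once more---this time to the two cusps $\del\nu(\alpha_0),\del\nu(\alpha_1)$ of $Z$, with the slopes $\mu_i+\tfrac{(-1)^i}{n}\lambda_i$ introduced above. For all $|n|$ sufficiently large, $L(2k,1)-\nu(K(a,b,n))$ is hyperbolic; moreover Dehn filling strictly decreases volume and the volume of the filled manifold tends to $\operatorname{vol}(Z)$ as the slopes escape to infinity, so
$$\operatorname{vol}\!\big(L(2k,1)-\nu(K(a,b,n))\big)<\operatorname{vol}(Z)\ \text{ for all such }n,\qquad \operatorname{vol}\!\big(L(2k,1)-\nu(K(a,b,n))\big)\longrightarrow\operatorname{vol}(Z).$$
A convergent sequence of real numbers whose terms all lie strictly below its limit takes infinitely many values, so the complements $L(2k,1)-\nu(K(a,b,n))$ realize infinitely many homeomorphism types; since the homeomorphism type of a knot complement is an isotopy invariant of the knot, the $K(a,b,n)$ comprise infinitely many distinct hyperbolic knot types in $L(2k,1)$.
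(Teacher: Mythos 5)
Your proposal is correct and follows essentially the same route as the paper: the first assertion via the observation that $K(a,b,n)$ remains a torsion curve on $\Pi_k$ together with Theorem~\ref{thm:Order2Minimizer}, and the second via encircling the two twist regions with $(-1/a)$-- and $(-1/b)$--framed circles to exhibit all the knot complements as Dehn fillings of one fixed $6$--cusped link complement (the paper's $M$, your $W$), whose hyperbolicity is checked with SnapPy, followed by two applications of the hyperbolic Dehn surgery theorem with its volume estimates (stated in the paper as Theorem~\ref{thm:NZ}, after Neumann--Zagier). Your intermediate manifold $Z$ is exactly the paper's $M_{a,b}$, and your ``convergent sequence strictly below its limit takes infinitely many values'' step matches the paper's extraction of a strictly increasing volume subsequence.
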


In order to prove the second statement in Theorem \ref{thm:InfiniteExamples}, we rely on a version of the hyperbolic Dehn surgery theorem due to Neumann and Zagier \cite{NZ}.  To state it, let $M$ be a hyperbolic manifold with cusps $\gamma_1,\dots,\gamma_n$, and choose a framing $(\mu_i,\lambda_i)$ for each $\gamma_i$.  Let $M((p_1,q_1),\dots,(p_n,q_n))$ be the result of Dehn filling cusp $\gamma_i$ by $p_i \mu_i + q_i \lambda_i$ for $i=1,\dots,n$, where $p_i,q_i$ are a pair of coprime integers; if no filling is done along $\gamma_i$, then replace the pair by the symbol $\infty$.

\begin{thm}\label{thm:NZ}
The manifold $M((p_1,q_1),\dots,(p_n,q_n))$ is hyperbolic for $\min\{p_i^2 + q_i^2\} \gg 0$.  Its volume is strictly less than that of $M$ if at least one cusp gets filled, and as $\min\{p_i^2 + q_i^2\} \to \infty$, the volume of $M((p_1,q_1),\dots,(p_n,q_n))$ tends to that of $M$.
\qed
\end{thm}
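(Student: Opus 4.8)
The plan is to follow the route of Neumann and Zagier, combining Thurston's deformation theory of hyperbolic structures with the variational (Schl\"afli) formula for volume. The whole argument is organized around the deformation space of (possibly incomplete) hyperbolic structures on the interior of $M$, together with the distinguished coordinates on it given by the complex lengths of the peripheral curves. By Mostow rigidity, $M$ carries a unique complete hyperbolic structure with holonomy $\rho_0\colon\pi_1(M)\to\mathrm{PSL}(2,\mathbb{C})$, and the first thing I would invoke is Thurston's result that near $\rho_0$ the character variety is a smooth complex manifold of dimension $n$, one parameter per cusp. For each cusp $\gamma_i$ with framing $(\mu_i,\lambda_i)$, let $u_i$ and $v_i$ denote the logarithmic holonomy derivatives (complex translation lengths) of $\mu_i$ and $\lambda_i$; these vanish at $\rho_0$ because the peripheral holonomy is parabolic. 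The functions $(u_1,\dots,u_n)$ serve as local holomorphic coordinates, and $v_i=v_i(u)$ is holomorphic with $v_i=\tau_i u_i+O(|u|^3)$, where $\tau_i$ is the modulus of the $i$-th cusp torus.

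With these coordinates in hand, I would set up the generalized Dehn surgery equations. Filling cusp $\gamma_i$ by $p_i\mu_i+q_i\lambda_i$ corresponds to demanding that the holonomy of that slope be elliptic with total rotation $2\pi$, i.e. $p_iu_i+q_iv_i=2\pi\sqrt{-1}$. For $\min\{p_i^2+q_i^2\}$ large these equations have a unique solution with every $|u_i|$ small, by an implicit-function/degree argument based on $v_i\approx\tau_i u_i$. The associated incomplete structure then completes by adjoining a short closed geodesic as the core of each filling torus, producing the hyperbolic Dehn filling $M((p_1,q_1),\dots,(p_n,q_n))$. This establishes the first assertion of the theorem.

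For the volume statements I would pass to the one-parameter family of cone manifolds interpolating between the cusped and the filled structures: fixing the slopes, let the cone angle $\alpha$ along the core geodesics run from $0$ (the complete structure on $M$, where the cores degenerate to cusps) up to $2\pi$ (the smooth filling). For large fillings the cores are short and this family exists by the same deformation theory. Schl\"afli's formula gives $\frac{d}{d\alpha}\mathrm{vol}=-\tfrac12\sum_i\ell_i(\alpha)$, where $\ell_i(\alpha)>0$ is the length of the $i$-th core geodesic, so integrating yields $\mathrm{vol}(M)-\mathrm{vol}(M((p_1,q_1),\dots))=\tfrac12\int_0^{2\pi}\sum_i\ell_i(\alpha)\,d\alpha>0$, which is the strict decrease. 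Finally, the Neumann--Zagier estimate $\ell_i\sim C_i/(p_i^2+q_i^2)$ with $C_i>0$ depending only on the cusp shape and framing, derived from the surgery equations together with $v_i=\tau_i u_i+O(|u|^3)$, forces every core length to $0$ as $\min\{p_i^2+q_i^2\}\to\infty$; hence the volume defect tends to $0$ and $\mathrm{vol}(M((p_1,q_1),\dots))\to\mathrm{vol}(M)$.

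The main obstacle is the foundational deformation theory underlying the first step: establishing that the character variety is smooth of the correct dimension $n$ at $\rho_0$ (Thurston's infinitesimal rigidity, via the ``half-lives-half-dies'' argument in the cohomology of the cusps) and that the incomplete structures genuinely complete to Dehn fillings rather than to more singular objects. This is where the hard analysis near the cusps lives. Once the coordinates $u_i$ and the surgery equations are secured, the volume statements follow cleanly from Schl\"afli's formula and the quadratic decay of the core lengths.
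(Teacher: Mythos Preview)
The paper does not prove this theorem: it is quoted as a result of Neumann and Zagier \cite{NZ}, stated with a terminal \qed\ and used as a black box in the proof of Theorem~\ref{thm:InfiniteExamples}. Your proposal sketches the actual Neumann--Zagier argument (Thurston's deformation space, the holonomy coordinates $u_i,v_i$ with $v_i=\tau_i u_i+O(|u|^3)$, solving the Dehn surgery equations $p_iu_i+q_iv_i=2\pi\sqrt{-1}$ for large $p_i^2+q_i^2$, and then the Schl\"afli-type volume formula together with the quadratic decay of core lengths), and that outline is accurate in spirit; but note that it goes well beyond what the paper itself does, which is simply to cite the result.
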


\begin{proof}[Proof of Theorem \ref{thm:InfiniteExamples}]

Since $\gamma \subset \Pi$ is a torsion curve and a Dehn twist of $\Pi$ along $\alpha$ is a homeomorphism of $\Pi$, the image of $\gamma$ under iterated Dehn twists along $\alpha$ is again a torsion curve on $\Pi$.  Therefore, the first assertion in the theorem follows from Theorem \ref{thm:Order2Minimizer}.

To establish the second part, we first describe the knot $K(a,b,n)$ using Kirby calculus.  We push $\alpha$ off in the two normal directions to obtain curves $\alpha_i$ which we frame by $\mu_i + (-1)^i n \lambda_i$, $i=0,1$, in order to apply the annulus twist construction described above.  We then replace the $a$ full twists between $\del F$ and $\gamma$ by a $(-\frac 1a)$--framed curve encircling the two, and similarly for the $b$ full twists between them.  Doing so alters the surgery coefficient on $\del F$ to $2k-a-b = k+2$.  The result of this construction is the surgery diagram for $K(a,b,n)$ displayed in Figure~\ref{fig:surgerylink}.  Note that $\lambda_i = \mu_i + \lambda_{i,S}$, where $\lambda_{i,S}$ is the standard Seifert framing for the curve representing $\alpha_i$.  Therefore, $\mu_i + (-1)^i n \lambda_i = \big((-1)^in+1\big) \mu_i + (-1)^in \lambda_{i,S}$, so we frame $\alpha_i$ by $1 + (-1)^i/n$ as shown.  (Note as well the sign error on $k$ occurring in \cite[Fig. 3]{Osoinach}.)

\begin{figure}[h]
\centering
\includegraphics[height=2in]{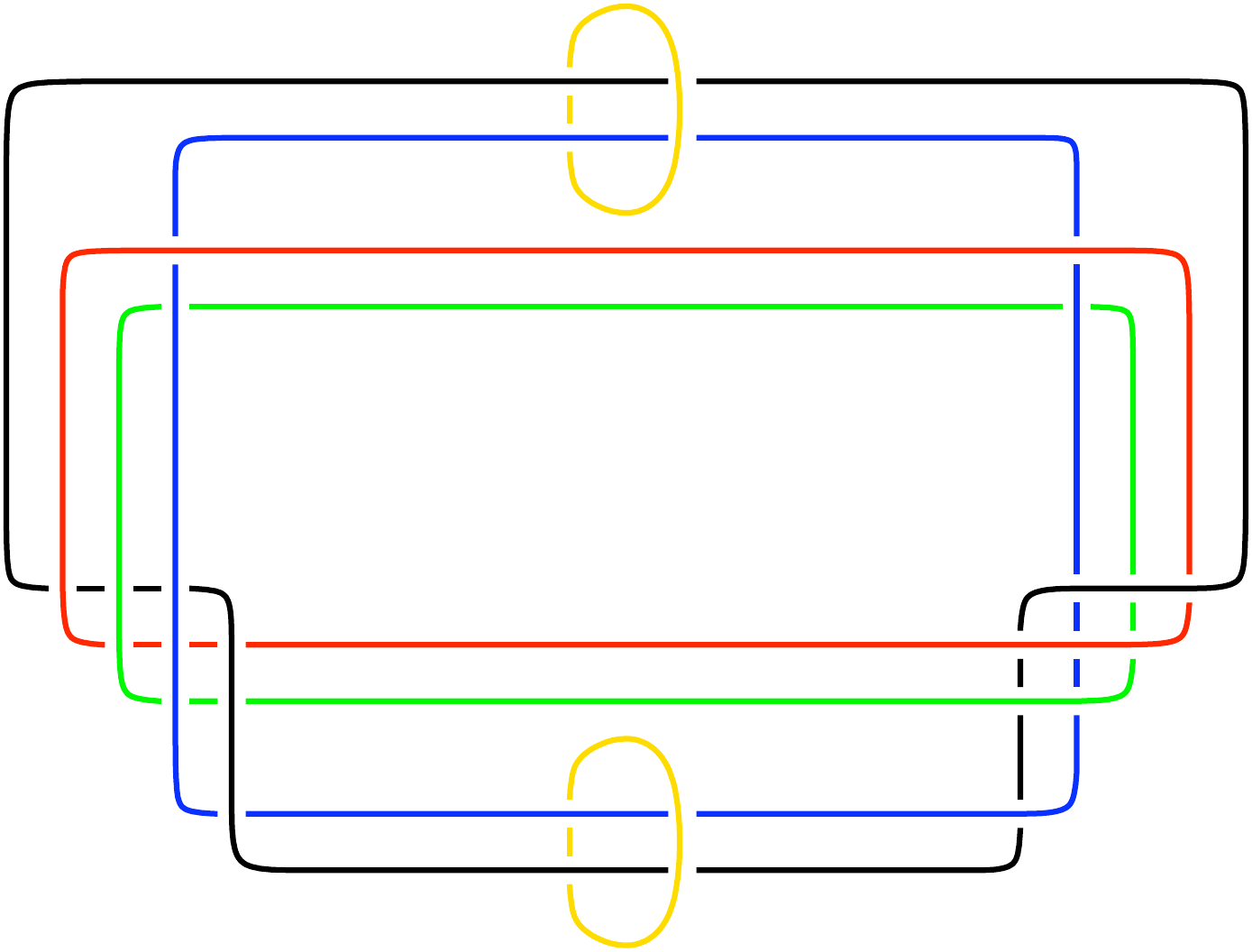}
\put(-110,150){$-1/a$}
\put(-110,-10){$-1/b$}
\put(-110,85){$1-1/n$}
\put(-110,55){$1+1/n$}
\put(-30,140){$k+2$}
\caption{\label{fig:surgerylink}Surgery on the framed link components produces the space $L(2k,1)$, $k = a+b+2$, and the image of the unframed component is the knot $K(a,b,n)$.}
\end{figure}

Let $L \subset S^3$ be the unframed link appearing in Figure \ref{fig:surgerylink} and $M$ its complement.  Label its components $\gamma_1,\dots,\gamma_6$ so that the surgered manifold has the description $M((-1,a),(-1,b),(k+2,1),(n-1,n),(n+1,n),\infty)$.  Let
$$
M_{a,b}=M((-1,a),(-1,b),(k+2,1),\infty,\infty,\infty).
$$
According to SnapPy, $M$ is hyperbolic, so by Theorem \ref{thm:NZ}, the manifold $M_{a,b}$ is hyperbolic for all $a,b \gg 0$.  Fix such a pair $a,b$.  By another application of Theorem \ref{thm:NZ}, there exists a sequence $\{n_i\}$ such that the volumes of the manifolds
$$
M((-1,a),(-1,b),(k+2,1),(n_i-1,n_i),(n_i+1,n_i),\infty)
$$
form a strictly increasing sequence in $i$ with limiting value equal to the volume of $M_{a,b}$.  In particular, all these manifolds are distinct, which is to say that the knots $K(a,b,n_i) \subset L(2k,1)$ are distinguished by their complements.  This concludes the proof of the theorem.
\end{proof}

For fixed values $a,b$, we may use SnapPy to verify that $M_{a,b}$ is hyperbolic.  This is the case, for instance, for $a=1$ and $3 \le b \le 10$.  (For $b=1,2$, SnapPy was inconclusive.)  It follows that the knots $K(1,k-3,n)$ represent infinitely many distinct genus minimizers in the homology class $k \in L(2k,1)$ for $6 \le k \le 13$.  Presumably this is the case for all $k \ge 4$.  Lastly, we note that the simplest interesting example given by this construction is $K(1,1,1)$, which is the knot $K_0$ of Figure \ref{fig:FirstExample}.


\end{document}